\newtheorem{thm}{Theorem}[section]
\newtheorem{lem}[thm]{Lemma}  
\newtheorem{prop}[thm]{Proposition}
\newtheorem{cor}[thm]{Corollary}
\newtheorem{remark}[thm]{Remark}
\renewcommand{\P}{\mathbb{P}}
\newcommand{\un}{\mathbf{1}}
\newcommand{\R}{\mathbb{R}}
\newcommand{\Z}{\mathbb{Z}}
\newcommand{\E}{\mathbb{E}}
\newcommand{\ep}{\epsilon}
\newcommand{\leqps}{\stackrel{\mbox{\scriptsize a.s.}}{\leq}}
\title[Sharpness for monotone absorbing IPS]{Sharpness for monotone absorbing Interacting Particle Systems}
\author{Jean Bérard }\address{Institut de Recherche Mathématique Avancée, 
UMR 7501 Université de Strasbourg et CNRS, 
7 rue René-Descartes, 67000 Strasbourg, France}\email{jean.berard@math.unistra.fr} 
\author{Barbara Dembin}\address{Institut de Recherche Mathématique Avancée, 
UMR 7501 Université de Strasbourg et CNRS, 
7 rue René-Descartes, 67000 Strasbourg, France}\email{barbara.dembin@math.unistra.fr}
\author{Laure Marêché}\address{Institut de Recherche Mathématique Avancée, 
UMR 7501 Université de Strasbourg et CNRS, 
7 rue René-Descartes, 67000 Strasbourg, France}\email{laure.mareche@math.unistra.fr}
\begin{document}

\begin{abstract}
We prove a sharpness result for the dynamics of finite-range Interacting Particle Systems (IPS) on $\{0,1\}^{\Z^d}$, which generalizes to a whole class of IPS the sharpness result for the phase transition of the contact process obtained by Bezuidenhout and Grimmett~\cite{BezuidenhoutGrimmett1991}. More precisely, starting from an IPS that is monotone, ergodic, and which admits the all-zero configuration as an absorbing state, we prove that there exists an arbitrarily small perturbation of the dynamics which leads to an \emph{exponentially} ergodic IPS. This also extends the sharpness result previously established for (discrete-time) probabilistic cellular automata in \cite{Har} to the continuous-time setting of IPS.
\end{abstract}

\maketitle
\textbf{MSC classification:} Primary 60K35; Secondary 82C22.

\textbf{Keywords:} interacting particle systems, sharpness, exponential ergodicity, OSSS inequality.

\section{Introduction}

\subsection{Sharpness for IPS}

The notion of \emph{sharpness} plays a central role in percolation theory. Most percolation models exhibit a \emph{phase transition}, characterized by a \emph{subcritical phase}, in which no infinite cluster exists, and a \emph{supercritical phase}, where such an infinite cluster does appear (see \cite{Grimmett}). In the subcritical phase, the absence of an infinite cluster implies that the probability of a given point being connected to distance $n$ tends to zero as $n \to \infty$.
For percolation models with short-range interactions, the subcritical phase is said to be \emph{sharp} if this decay occurs exponentially fast in $n$, for any parameter value within the subcritical phase. This property was first established in the setting of Bernoulli percolation \cite{AizenmanBarsky,Menshikov}, and more recently given a more robust and general proof by Duminil-Copin, Raoufi, and Tassion using the \emph{OSSS inequality} \cite{OSSSdiscret}. Their approach has since been successfully applied to a wide class of percolation models (see for instance \cite{DCRT2,DCRT,GhoshRahul,Hartarsky2018}) and has found applications in other related areas (for instance probabilistic cellular automata, see \cite{Har}, which was a key inspiration for the present paper).

The question of sharpness has also been investigated in the setting of \emph{interacting particle systems (IPS)}, which is the focus of this work. An IPS is a Markov process in which every element of $\mathbb{Z}^d$ (they are called \emph{sites}) is given a state, and the states evolve in continuous time as follows. The timeline of each site is independently equipped with a Poisson point process, and when this process has a point, the site updates its state randomly according to a distribution depending only on the current configuration within a finite neighborhood. In this paper, we focus on monotone IPS on $\{0,1\}^{\Z^d}$; that is, the set of possible states at each site is $\{0,1\}$, and the Markov process dynamics preserves the usual partial order between probability distributions on $\{0,1\}^{\Z^d}$.

We say that an IPS is \emph{ergodic} if it forgets its initial condition over time; that is, for any initial configuration, the probability distribution of the configuration at time $t$ converges as $t$ goes to infinity to a limiting distribution that does not depend on the initial configuration. When in addition the IPS is monotone, ergodicity is equivalent to the fact that the probability distribution of the state of any given site, converges to a limiting distribution that does not depend of the initial configuration.
If this convergence occurs exponentially fast, we say the IPS is \emph{exponentially ergodic}.

By analogy with percolation, monotone exponentially ergodic IPS can be viewed as \emph{subcritical}, while monotone ergodic but not exponentially ergodic IPS may be thought of as \emph{critical}\footnote{One can prove that if a monotone IPS is ergodic but not exponentially ergodic, then the speed of convergence is at most polynomial
 (this can be done along the lines of the proof of Theorem 3.12 of \cite{Martinelli}). This is analogous to the critical phase of percolation, where the probability of connection to distance $n$ is expected to decay polynomially.}. Monotone non-ergodic systems, on the other hand, can be seen as \emph{supercritical}, as they allow information to ``percolate'' to infinity.
By sharpness in the context of monotone IPS, we mean that for every ergodic IPS we can find an exponentially ergodic IPS arbitrarily close to it\footnote{This is analoguous to the notion of sharpness for  Bernoulli percolation with parameter $p$. Indeed if we prove that, for every $p$ such that there is no percolation, there is exponential decay at $p-\ep$ for every $\ep > 0$, then we can prove that there is exponential decay in the full subcritical regime, using monotonicity.}.

 An important IPS for which sharpness has received much attention is the \emph{contact process}, a model that describes the spread of an infection. In this IPS, a site in state $1$ (infected) recovers into state $0$ (healthy) at rate $1$, while each site in state $1$ ``contaminates'' each of its neighbors at rate $\lambda$, for some parameter $\lambda>0$ (see Part I of \cite{Lig2} for an introduction). This model admits the all-0 configuration as an absorbing state, and is ergodic when, starting from any initial configuration, the probability that a given site is in state $1$ tends to $0$ as time goes to infinity. It is well-known that there exists a critical threshold $\lambda_c$ so that when $\lambda<\lambda_c$ the process is ergodic while if $\lambda>\lambda_c$ it is not ergodic. An important result of Bezuidenhout and Grimmett \cite{BezuidenhoutGrimmett1990} shows the contact process is ergodic at $\lambda_c$; this was later generalized to more general monotone IPS by Bezuidenhout and Gray \cite{BezuidenhoutGray1994}. Sharpness for the contact process means that for all $\lambda < \lambda_c$, starting from the all-$1$ configuration, the probability that a given site is in state $1$ tends to $0$ exponentially quickly, which was first proven by Bezuidenhout and Grimmett \cite{BezuidenhoutGrimmett1991}. Much later, Swart \cite{Swart} and Beekenkamp \cite{beekenkamp} gave other proofs of this\footnote{Technically, the sharpness results in \cite{BezuidenhoutGrimmett1991,Swart,beekenkamp} are stated in terms of the probability that there are still some $1$s at time $t$ starting from a single $1$ at time $0$, but this probability is equal to the probability a given site is in state $1$ starting from an all-$1$ initial configuration by a duality property (see (1.7) in \cite{Lig2}).}.

In this paper, we give a proof of sharpness for a general class of IPS which includes the contact process as a special case (in a similar spirit as the generalization of the results of \cite{BezuidenhoutGrimmett1990} by \cite{BezuidenhoutGray1994}).
More precisely, we study monotone \emph{absorbing} IPS, which means that the limiting distribution is a Dirac mass concentrated on the all-$0$ configuration, and prove that any ergodic IPS of this class can be perturbed to become exponentially ergodic. 
We also prove some topological properties of the set of ergodic IPS within this class.

\subsection{Model and results}

 For any $\xi \in \{0,1\}^{\mathbb{Z}^d}$, $x\in\mathbb{Z}^d$, we denote by $\xi(x)$ the value of $\xi$ at $x$. For any $m\in\mathbb{N}$, we denote $\Lambda_m\coloneqq\{-m,...,m\}^d$. We consider finite-range interactions, so we let $R\in\mathbb{N}$ and define the \emph{transition rates} as functions $c_0$ and $c_1$ $: \{0,1\}^{\Lambda_R} \to \mathbb{R}_+$. For $\xi \in \{0,1\}^{\mathbb{Z}^d}$, $i=0$ or $1$, we will write $c_i(\xi)$ for $c_i(\xi_{|\Lambda_R})$, where $\xi_{|\Lambda_R}$ is the restriction of $\xi$ to $\Lambda_R$. 
The infinitesimal generator of the IPS\footnote{See \cite{Lig} p. 122 for the definition of an IPS through its generator. Note that their definition is via local flip rates $c(\xi)$, where the generator is written as
$Lf(\xi) = \sum_{x \in \Z^d} c(\tau_x \xi) (f(\xi^{x,1-\xi(x)})-f(\xi)).$
This is equivalent to our setting if we choose $c(\xi) = c_1(\xi)$ when $\xi(0)=0$, and $c(\xi)=c_0(\xi)$ when $\xi(0)=1$.} is given by
\begin{equation}\label{e:def-L}Lf(\xi) \coloneqq \sum_{x \in \Z^d} (c_1(\tau_x \xi) (f(\xi^{x,1})-f(\xi)) +c_0(\tau_x \xi) (f(\xi^{x,0})-f(\xi)) ),\end{equation}
where $f \ : \ \{0,1\}^{\Z^d} \to \R$ depends on finitely many coordinates,  and, for $x \in \Z^d$ and $\xi \in \{0,1\}^{\Z^d}$,  
 $\tau_x \xi \in \{0,1\}^{\Z^d}$ is given by $(\tau_x \xi)(y) \coloneqq \xi(y-x)$, and where $\xi^{x,1} \in \{0,1\}^{\Z^d}$ satisfies $\xi^{x,1}(y) = \xi(y)$ for $y \neq x$, $\xi^{x,1}(x)=1$ ($\xi^{x,0}$ is defined similarly). 

More generally, we consider a perturbation of the previous dynamics, whose generator is given, for $\varepsilon > 0$, by 
\begin{equation}\label{e:def-Leps}L^{\varepsilon} f(\xi) \coloneqq L f (\xi) + \varepsilon \sum_{x\in\mathbb{Z}^d}(f(\xi^{x,0})-f(\xi)),\end{equation}
which amounts to replacing $c_0$ by $c_0 + \varepsilon$ in \eqref{e:def-L}.

Denote by $\mathbf{0}$ the configuration with $0$ at every site (and $\mathbf{1}$ is defined similarly). 
 For any $\xi,\eta \in \{0,1\}^{\mathbb{Z}^d}$, we write $\xi \leq \eta$ when for all $x\in\mathbb{Z}^d$ we have $\xi(x)\leq\eta(x)$. 

We assume that rates enjoy the following properties: 
\begin{itemize}
\item[(i)]\label{prop:mon} monotonicity:  when $\xi \leq \eta$, $c_1(\xi) \leq c_1(\eta)$ and $c_0(\xi) \geq c_0(\eta)$;
\item[(ii)] $\mathbf{0}$ is absorbing: $c_1(\mathbf{0}) = 0$.\label{prop:abs}
\end{itemize}  
 We denote by $\mathscr{L}$ the set of generators $L$ defined as above whose rates satisfy (i) and (ii). 

 In the specific context of an IPS $(X_t)_{t \geq 0}$ with generator in $\mathscr{L}$, 
ergodicity is equivalent to the fact that, starting at $X_0=\mathbf 1$, we have that $\lim_{t \to +\infty} \P(X_t(0)=1) = 0$.  We say that the IPS has \emph{exponential ergodicity} if, moreover, there exist $C,c>0$ such that for all $t\ge0$
\begin{equation}\label{eq:exponential ergodicity}
    \P(X_t(0)=1)\le Ce^{-ct}.
\end{equation}

\begin{thm}\label{t:BJL-1}
Assume $L$ is a generator in $\mathscr{L}$, hence monotone with $\mathbf{0}$ absorbing, such that the IPS with generator $L$ defined in \eqref{e:def-L} is ergodic. Then, for all $\varepsilon > 0$,  the perturbed IPS with generator $L^{\varepsilon}$ defined in \eqref{e:def-Leps} is exponentially ergodic.
\end{thm}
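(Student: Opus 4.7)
Fix $\varepsilon>0$, let $X^\varepsilon$ denote the perturbed IPS started from $\mathbf{1}$, and set $p_T(\varepsilon):=\P(X^\varepsilon_T(0)=1)$. By monotonicity and translation invariance, proving the theorem reduces to showing that $T\mapsto p_T(\varepsilon)$ decays exponentially fast. The ergodicity of $L$ together with monotonicity in $\varepsilon$ gives only that $p_T(\delta)\to 0$ as $T\to\infty$ for every $\delta\ge 0$; the task is to upgrade this to an exponential rate once $\varepsilon>0$. Following the OSSS-based scheme of \cite{OSSSdiscret,Har}, I would first construct $X^\varepsilon$ via a graphical representation based on independent Poisson point processes at each site (together with independent uniform marks used to thin update attempts against $c_0$ and $c_1$, and an independent ``kill'' Poisson process of rate $\varepsilon$), and then, after a space-time truncation to $\Lambda_n\times[0,T]$ for a well-chosen $n=n(T)$, view $\ind_{\{X^\varepsilon_T(0)=1\}}$ as a function of finitely many independent pieces of randomness.

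The core of the argument is a differential inequality in $\varepsilon$. On the one hand, a Russo-type formula from Campbell's identity for Poisson perturbations, combined with monotonicity and the fact that $\mathbf{0}$ is absorbing (which forces an extra kill to only move the indicator from $1$ to $0$), gives
\[
-\partial_\varepsilon p_T(\varepsilon)=\int_0^T\sum_{x\in\Z^d}\pi_{x,s}(\varepsilon)\,ds,
\]
where $\pi_{x,s}(\varepsilon)$ is the probability that inserting a kill at $(x,s)$ is pivotal for $\{X^\varepsilon_T(0)=1\}$; the right-hand side is, up to a universal constant, the total OSSS-influence of the kill process. On the other hand, for any randomized algorithm $\mathscr A$ determining $X^\varepsilon_T(0)$, OSSS yields
\[
p_T(\varepsilon)\bigl(1-p_T(\varepsilon)\bigr)\le C\max_i\delta_i(\mathscr A)\cdot\bigl(-\partial_\varepsilon p_T(\varepsilon)\bigr),
\]
so it suffices to design an algorithm whose maximal revealment $\max_i\delta_i(\mathscr A)$ vanishes as $T\to\infty$. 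A bootstrap in the spirit of \cite{OSSSdiscret,DCRT} then converts this into exponential decay of $p_T(\varepsilon')$ for every $\varepsilon'>0$.

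For the algorithm I would explore the ``backward cone of influence'' of $(0,T)$: the random space-time set $(y,s)$ whose graphical data must be consulted to compute $X^\varepsilon_T(0)$. Concretely, pick $r\in\{1,\dots,n\}$ uniformly, first reveal everything inside $\Lambda_r\times[0,T]$, and then extend only as forced by the local finite-range dependency. Monotonicity is what makes this tractable: the backward-influence set at time $s<T$ is stochastically dominated by the set of $1$'s at time $T-s$ in a coupled \emph{forward} process started from $\mathbf{1}$, whose typical spatial extent is controlled by $p_{T-s}(\varepsilon)$-type quantities. Averaging over $r$ should then yield a revealment bound of order $\tfrac{1}{n}\int_0^T p_t(\varepsilon)\,dt$ (plus negligible boundary terms), which is $o(1)$ after choosing $n=n(T)$ appropriately. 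The main obstacle I expect is precisely this last step: rigorously defining the backward cone of influence for a continuous-time IPS with configuration-dependent rates (as opposed to the much cleaner site-percolation or discrete-time PCA settings of \cite{OSSSdiscret,Har}) and dominating it by the forward process in a way that produces a revealment bound involving only the $p_t$'s themselves and no other uncontrolled quantity. A secondary, but nontrivial, obstacle is handling the spatial truncation $\Lambda_n\uparrow\Z^d$ so that the OSSS inequality, proved for the truncated function, closes back onto the infinite-volume quantity $p_T(\varepsilon)$.
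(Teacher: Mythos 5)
There is a genuine gap in the central step relating the OSSS influence to the Russo derivative. In your sketch you write the OSSS inequality as
\[
p_T(\varepsilon)\bigl(1-p_T(\varepsilon)\bigr)\le C\max_i\delta_i(\mathscr A)\cdot\bigl(-\partial_\varepsilon p_T(\varepsilon)\bigr),
\]
but this is not what OSSS gives. OSSS bounds the variance by (revealment)$\times$(\emph{total} influence of the underlying randomness), and in your graphical construction the randomness consists of \emph{both} the update Poisson process (with its uniform thinning marks) and the kill Poisson process. The Russo/Campbell computation, on the other hand, identifies $-\partial_\varepsilon p_T(\varepsilon)$ only with the total influence of the \emph{kill} process, i.e.\ with the probability that inserting an extra \emph{kill} is pivotal. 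For your differential inequality to be valid you must show that the influence contributed by the update process (inserting an extra update attempt) is bounded, up to a constant, by the kill influence. This is exactly the phenomenon the paper flags as the key obstruction in passing from discrete-time PCA to continuous-time IPS: a space-time point can be pivotal in the sense that its \emph{presence} changes $\ind_{\{X_T(0)=1\}}$ (e.g.\ an update that flips a $1$ to a $0$) even though swapping it between ``update'' and ``kill'' changes nothing. The paper handles this with a dedicated modification argument (Lemma~\ref{l:borne-infl}), which crucially uses the normalization $c_1(\mathbf 1)>0$ (cost a factor $M/c_1(\mathbf 1)$) and an averaging over the uniform mark $u$ rather than a pathwise comparison. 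Your sketch silently treats the two influences as the same and thus skips the hardest part of the proof.

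Two secondary points. First, your exploration algorithm randomizes over a spatial radius $r\in\{1,\dots,n\}$ (in the style of Duminil-Copin--Raoufi--Tassion), while the paper randomizes over a uniform starting time $S\in[0,T]$ and explores forward from $S$; the time randomization is what directly produces the factor $\frac1T\int_0^T\theta_s\,ds$ in the revealment and keeps everything expressed in terms of the $\theta_t$'s. With the spatial randomization you would additionally need to tie $n$ to $T$ and to control the spatial extent of the backward cone (your heuristic bound $\tfrac1n\int_0^T p_t\,dt$ is not even dimensionless as written), so this route, while not hopeless, requires more work than you acknowledge. Second, your truncation concern is legitimate but routine and is essentially the content of Lemmas~\ref{limite-m}, \ref{lem_bound_derivatives} and the passage \eqref{e:differential-ineq-m}$\to$\eqref{e:differential-ineq} in the paper; the decisive missing piece is the influence comparison above.
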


Note that the constants $C,c$ in \eqref{eq:exponential ergodicity} associated with the statement of exponential ergodicity in Theorem \ref{t:BJL-1} may depend on $\varepsilon$.

 The generator $L$ is characterized by the couple $(c_0,c_1)$, which can be seen as an element of $\mathbb{R}^n$ for some integer\footnote{The $c_i$ are functions $: \{0,1\}^{\Lambda_R} \to \mathbb{R}$, so they can be seen as tuples whose length is the cardinal of $\{0,1\}^{\Lambda_R}$.} $n$. This yields a natural topology on such generators. In the following, we will consider the topology \emph{induced on $\mathscr{L}$}. For any set $S \subset \mathscr{L}$, let $\mathrm{Int}(S)$ and $\mathrm{Cl}( S)$ denote respectively the interior and the closure of the set $S$ with respect to this induced topology.
\begin{cor}\label{cor_topology}Let $S$ be the set of generators in $\mathscr{L}$ such that the associated IPS is ergodic. Let $L$ be in the interior of $S$, then the IPS associated to $L$ is exponentially ergodic. Moreover, we have $S\subset \mathrm{Cl(\mathrm{Int}(S)})$.
    
\end{cor}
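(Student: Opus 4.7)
The strategy is to realize $L$ itself as a perturbed generator, i.e., to write $L=L'^{\varepsilon}$ for some $\varepsilon>0$ and some $L'\in\mathscr{L}\cap S$, after which Theorem~\ref{t:BJL-1} applied to $L'$ immediately yields the exponential ergodicity of $L$. The crucial observation is that any $L\in S$ must satisfy $c_0(\mathbf{1})>0$: starting from $\mathbf{1}$, only flips at rate $c_0(\mathbf{1})$ can occur (since $\tau_x\mathbf{1}=\mathbf{1}$ and the $c_1$ term is ineffective on sites already at $1$), so $c_0(\mathbf{1})=0$ would leave the dynamics frozen at $\mathbf{1}$, contradicting ergodicity. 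Monotonicity then forces $c_0(\eta)\geq c_0(\mathbf{1})>0$ for every $\eta\in\{0,1\}^{\Lambda_R}$, so $m:=\min c_0>0$ on this finite set. Since $L\in\mathrm{Int}(S)$, pick $\varepsilon\in(0,m)$ small enough that every $L''\in\mathscr{L}$ within distance $\varepsilon$ of $L$ lies in $S$, and define $L'$ by $c_0^{L'}:=c_0-\varepsilon$ and $c_1^{L'}:=c_1$; then $L'\in\mathscr{L}\cap S$ (monotonicity and the absorbing condition are preserved), and $L=L'^{\varepsilon}$.

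\textbf{Plan for Part (2).} For $L\in S$, the natural approximants in $\mathrm{Int}(S)$ are the generators $L^{\varepsilon}$ for small $\varepsilon>0$: we have $L^{\varepsilon}\to L$ as $\varepsilon\to 0$, and each $L^{\varepsilon}$ is exponentially ergodic by Theorem~\ref{t:BJL-1}. It thus suffices to show $L^{\varepsilon}\in\mathrm{Int}(S)$ for every $\varepsilon>0$; concretely, given $L''\in\mathscr{L}$ with $\delta:=\|L''-L^{\varepsilon}\|$ small, we must prove $L''\in S$. The first step imitates Part (1): set $c_0^{\tilde L}:=c_0^{L''}-\varepsilon/2$ and $c_1^{\tilde L}:=c_1^{L''}$, so that $L''=\tilde L^{\varepsilon/2}$; for $\delta<\varepsilon/2$, one has $\tilde L\in\mathscr{L}$, and if $\tilde L$ is ergodic then Theorem~\ref{t:BJL-1} upgrades $L''$ to exponential ergodicity.

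\textbf{The main obstacle} is proving $\tilde L\in S$. One cannot invoke direct stochastic domination of $\tilde L$ by $L$, since $c_1^{\tilde L}=c_1^{L''}$ may strictly exceed $c_1^{L}$ at configurations $\eta\neq\mathbf{0}$ by up to $\delta$. The approach I would take is a quantitative stability argument for exponential ergodicity: through a Harris graphical construction, couple $\tilde L$ with the exponentially ergodic $L^{\varepsilon/4}$ via common Poisson clocks. The rate mismatch is of order $\delta$, and in the continuous-time finite-range setting the influence of a single rate discrepancy propagates through a cone of polynomial volume, leading to an estimate of the form
\begin{equation*}
\mathbb{P}\bigl(X^{\tilde L}_t(0)\neq X^{L^{\varepsilon/4}}_t(0)\bigr)\leq C\delta\,t^{d+1}.
\end{equation*}
Combined with the exponential decay $\mathbb{P}(X^{L^{\varepsilon/4}}_t(0)=1)\leq C_1 e^{-\gamma t}$ given by Theorem~\ref{t:BJL-1} and optimization over $t\sim\gamma^{-1}\log(1/\delta)$, this yields $\mathbb{P}(X^{\tilde L}_t(0)=1)\leq C'\delta\log^{d+1}(1/\delta)$ for large $t$. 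Using the monotonicity of $t\mapsto\mathbb{P}(X^{\tilde L}_t(0)=1)$ starting from $\mathbf{1}$, together with a bootstrap that re-applies the coupling estimate to any putative non-trivial invariant measure of $\tilde L$ (which, by the absorbing structure, would otherwise coexist with $\delta_{\mathbf{0}}$), one forces $\lim_t\mathbb{P}(X^{\tilde L}_t(0)=1)=0$, i.e., ergodicity of $\tilde L$, completing the proof.
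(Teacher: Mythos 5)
Your Part~(1) is correct and is a genuinely more elementary route than the one the paper gestures at. From $L\in\mathrm{Int}(S)$ and the observation that ergodicity forces $c_0(\mathbf 1)>0$ (hence, by monotonicity (i), $\min c_0>0$ on the finite set of local configurations), you write $L=L'^\varepsilon$ with $c_0^{L'}=c_0-\varepsilon$ and $\varepsilon$ small enough that $L'\in\mathscr L$ still satisfies (i)--(ii) and, crucially, lies in $S$ by the interior hypothesis; Theorem~\ref{t:BJL-1} then gives exponential ergodicity of $L=L'^\varepsilon$. No stability input beyond the main theorem is needed. (One small care point: the distance from $L$ to $L'$ is exactly $\varepsilon$, so $\varepsilon$ should be chosen strictly smaller than the radius of an open ball around $L$ contained in $S$; this is cosmetic.) The paper, by contrast, says the whole corollary should be proved along the lines of Theorem~1.2 of \cite{Har} with Theorem~2.2 of Crawford--De Roeck \cite{Crawford2018} playing the role of their Proposition~2.5, so your Part~(1) is an independent, self-contained argument for the first assertion.

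Part~(2) is where the content of Crawford--De Roeck enters, and this is exactly where your proposal has a genuine gap. The reduction $L''=\tilde L^{\varepsilon/2}$ is fine, as is the observation that one only needs ergodicity of $\tilde L$. But the heart of the matter, showing that $\tilde L$ is ergodic when $\delta$ is small, is only sketched, and the sketch does not close. First, the phrase ``the rate mismatch is of order $\delta$'' is misleading: the mismatch between $c_0^{\tilde L}$ and $c_0^{L^{\varepsilon/4}}$ is of order $\varepsilon/4$, not $\delta$; what saves you is that this mismatch is in the favorable direction for monotone domination, so only the $O(\delta)$ mismatch in $c_1$ can break the coupling. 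This should be stated, because it is the reason the coupling estimate is even plausible. Second, and more importantly, the resulting bound
\[
\P\bigl(X_t^{\tilde L}(0)=1\bigr)\le C_1 e^{-\gamma t}+C\delta\,t^{d+1}
\]
diverges as $t\to\infty$. Optimizing in $t$ and using that $t\mapsto\P(X_t^{\tilde L}(0)=1)$ is non-increasing only yields $\lim_{t\to\infty}\P(X_t^{\tilde L}(0)=1)\le C'\delta\log^{d+1}(1/\delta)$, i.e.\ that the upper invariant density of $\tilde L$ is \emph{small}, not that it is zero. For a monotone absorbing IPS there is no general reason a small upper invariant density must vanish (think of a contact process just above criticality), so this is precisely the step that requires an argument. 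The ``bootstrap that re-applies the coupling estimate to any putative non-trivial invariant measure'' does not obviously help: replaying the same coupling from the upper invariant measure, or from $\mathbf 1$, produces the same $\delta t^{d+1}$ term and hence the same conclusion. What is actually needed is a stability theorem for exponential ergodicity of finite-range IPS under small perturbations of the rates, typically proved via a finite-size criterion or renormalization; this is exactly the role of Theorem~2.2 of \cite{Crawford2018}, which the paper invokes for this purpose. As written, Part~(2) sets up the right comparison but stops short of establishing the key robustness input, so the proof of $S\subset\mathrm{Cl}(\mathrm{Int}(S))$ is incomplete.
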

We do not give a full proof of Corollary \ref{cor_topology}. One can argue as in the proof of Theorem 1.2 of \cite{Har}, but replacing their important Proposition 2.5 by Theorem 2.2 of Crawford--De Roeck \cite{Crawford2018}.

\subsection{Sketch of proof}

The IPS admits a \emph{graphical construction}, an encoding of the system's dynamics using a space-time Poisson point process. Specifically, the graphical construction is governed by a Poisson point process on $\mathbb{Z}^d \times \mathbb{R}_+ \times [0,M]$, where $M$ is a suitable number. Here $\mathbb{Z}^d$ and $\mathbb{R}_+$ respectively encode space and time, and the interval $[0,M]$ is used to determine the update at each space-time point.

Each atom $(x, t, u)$ in this Poisson point process corresponds to a potential update at site $x$ and time $t$, while $u\in[0,M]$ is fed into a function to determine the outcome of the update, in such a way the outcome has the correct distribution. 
The state of the system at any time $t$ and site $x$, denoted $X_t(x)$, is then determined by iteratively applying these updates in chronological order, starting from the initial configuration.

We now explain how we perturb the dynamics. Each point in the Poisson point process is initially marked with the label $A$. Then, independently for each point, we change its mark from $A$ to $B$ with probability $h \in [0,1]$, which serves as our perturbation parameter.
The perturbed dynamics is defined as follows:
at points marked $A$, we apply the update associated to the unperturbed IPS. At points marked $B$, we override the update and set the state of the site to $0$.

This defines a new perturbed generator $L_h$, which corresponds to a time-rescaled version of the original generator $L_\varepsilon$ (see the formal definition in Equation~\eqref{e:def-Lh}). Importantly, exponential ergodicity for $L_h$ implies exponential ergodicity for $L_\varepsilon$. From now on, we work with $L_h$ which is more amenable to analysis.

Our primary analytical tool is the \emph{continuous version of the OSSS inequality}\footnote{Although we use the continuous form of the OSSS inequality, a similar argument could be carried out using the discrete version as in the proof of \cite[Theorem 5.9]{beekenkamp}.}, established in \cite{LasPecYog}. We apply the OSSS inequality to the function $f\coloneqq \un_{\{ X_t(0)=1\}}$ where $(X_t)_{t\ge 0}$ is the IPS with generator $L_h$ starting at $X_0=\mathbf 1$. The OSSS inequality is associated with an exploration algorithm of the graphical construction to determine the value of $f$. A central notion of the OSSS inequality is the \emph{influence}; 
in the continuous setting, it corresponds to the probability that adding a point to the graphical construction changes the value of $f$. 
The main difficulty lies in the fact that a point in the graphical construction can be \emph{pivotal}—meaning its presence changes the value of $f$—without its \emph{mark} (either $A$ or $B$) being pivotal\footnote{For instance, consider adding a point at a pivotal space-time location—i.e., a site where changing its value affects $f$—where the current value is $1$. Suppose the update rule of the IPS with mark $A$ results in $0$. In this case, the output is $0$ regardless of whether the mark is $A$ or $B$, so the mark is not pivotal, even though the point’s presence is.}. However, when relating pivotality to the derivative with respect to the perturbation parameter $h$, it is the pivotality of the mark that matters.
As a result, the influence tied to the perturbation cannot be fully understood through a pathwise comparison alone. Instead, the analysis must be conducted at the level of expectations and averages over the randomness in the graphical construction. This difficulty was not present in the discrete setting in \cite{Har}.

\subsection{Related works}

As mentioned earlier, our result generalizes the sharpness property of the contact process, which was originally proven by Bezuidenhout and Grimmett in \cite{BezuidenhoutGrimmett1991}. The proof in \cite{BezuidenhoutGrimmett1991} relies on pivotality estimates, but does not take advantage of the (as yet unknown) OSSS machinery. In \cite{Swart}, Swart gave a much simpler proof of sharpness for the contact process, based on very different ideas, namely the study of a suitable harmonic function. The proof of Beekenkamp \cite[Theorem 5.9]{beekenkamp} uses the OSSS inequality, so we expand on it a little more. The contact process admits a graphical construction governed by a marked Poisson point process, where marks represent either recoveries or arrows indicating the transmission of infection to a neighboring site.  In this framework, \cite{beekenkamp} relies on the discrete OSSS inequality thanks to a suitable discrete partitioning of the time variable. A key distinction between his setting and ours lies in the notion of pivotality: in the contact process, the pivotality of the presence of a point is equivalent to the pivotality of its mark (e.g., changing a recovery into an infection arrow, or vice versa). This equivalence allows for a straightforward connection between influence and the derivative with respect to $\lambda$. In contrast, this property does not hold in general for our model, where a point's presence may be pivotal without its mark being influential. Thus, we must rely on more delicate modification arguments to overcome this difficulty and we recover the results of \cite{beekenkamp} as a special case of our main theorem. 

 Finally, in \cite{Har}, Hartarsky investigates the sharpness of phase transitions in monotone probabilistic cellular automata (PCA) with $0$ as an absorbing state, which corresponds to the discrete-time analogue of our model. He considers a parametrized family of automata constructed from a given probabilistic automaton $\mathcal A$, where at each site and time step, the update follows the rules of $\mathcal A$ with probability $1-p$, and outputs $0$ with probability $p$. Hartarsky proves that this family exhibits a sharp phase transition: in the (possibly empty) supercritical phase, the automaton does not admit $ \delta_0$ as the unique invariant measure, while in the subcritical phase, the system has exponential ergodicity. 
Our work extends this result to the continuous setting of IPS. However, this extension is non-trivial and requires techniques that are not needed in the discrete-time setting of PCA. Indeed, in the case of PCA, the influence can be easily controlled by the pivotality, while this control is more difficult to achieve in our continuous setting as explained in the sketch of proof given above.

\subsection{Open questions}
In this work, we have focused exclusively on the case of monotone absorbing IPS. For this class, it is straightforward to identify perturbations that accelerate convergence (here, by adding updates leading to $0$). However, for general monotone IPS, it remains unclear which perturbations, if any, may speed up convergence. A natural question arises: can this result be extended beyond systems with absorbing states? More specifically, does a monotone ergodic automaton always admit an arbitrarily small perturbation that is exponentially ergodic?

\subsection{Organization of the paper}
In Section \ref{Sec: coupling}, we introduce the graphical construction and the definition of pivotality together with basic properties and estimates. Section \ref{sec:OSSS} is devoted to the exposition of the continuous OSSS framework from \cite{LasPecYog}, the definition of the exploration process and the proof of Theorem \ref{t:BJL-1}.
 
\section{Coupling structure and pivotality}\label{Sec: coupling}

\subsection{Graphical construction}

\subsubsection{Construction of the dynamics}\label{sec_cons}

We use a version of the so-called graphical construction of the dynamics based on a Poisson process, in the spirit of \cite{Harr}. See e.g. \cite{Swa} (Section 4.3) for more details on this kind of construction in a general setup. 

In the sequel, we denote by $L$ an infinitesimal generator satisfying the assumptions of Theorem \ref{t:BJL-1}, i.e. $L\in \mathscr{L}$ and the resulting IPS is ergodic. We define two constants $C_0$ and $C_1$ by $C_0\coloneqq\max\{c_0(\xi)\,|\,\xi\in\{0,1\}^{\mathbb{Z}^d}\}$ and $C_1\coloneqq\max\{c_1(\xi)\,|\,\xi\in\{0,1\}^{\mathbb{Z}^d}\}$. Note that, thanks to the monotonicity condition (i), we have that $C_0 = c_0(\mathbf{0})$ and $C_1 = c_1(\mathbf{1})$. Let then $M\coloneqq C_0+C_1$.
 \begin{remark}Note that the value of $c_0(\xi)$ when $\xi(0)=0$ (resp. the value of $c_1(\xi)$ when $\xi(0)=1$) has no impact on the dynamics. Hence, we can assume without loss of generality that
\begin{itemize}
\item[(iii)] $c_1(\mathbf{1})>0$
\end{itemize}
and conditions (i) and (ii) still hold. From now on, we will assume that (i), (ii) and (iii) hold.

\end{remark}

In the proofs, it will be more convenient to work with the following perturbed generator rather than $L^\ep$: for $h\in[0,1]$,
\begin{equation}\label{e:def-Lh}L_h f(\xi) \coloneqq (1-h) L f (\xi) + M h \sum_{x\in\mathbb{Z}^d}(f(\xi^{x,0})-f(\xi)).\end{equation}

Let $\mathscr{S} \coloneqq \Z^d \times \R_+ \times [0,M] \times \{A , B \}$, and for $h \in [0,1]$, define a locally finite positive measure $\lambda_h$ on $\mathscr{S}$ by
$\lambda_h \coloneqq M \left(\mbox{counting} \otimes \mbox{Lebesgue} \otimes \mbox{uniform} \otimes ((1-h) \delta_{\{A\}} + h \delta_{\{B\}}) \right).$
We then consider a Poisson process $\mathscr{P}$ on $\mathscr{S}$ whose intensity measure is $\lambda_h$; $\mathscr{P}$ is viewed as a random variable on a probability space $(\Omega, \mathcal{F} ,\P_h)$.

Denote by $\pi$ the canonical projection from $\mathscr{S}$ to $\Z^d \times \R_+$, i.e. $\pi(x,t,u,w) \coloneqq (x,t)$. 

Starting from an initial condition $X_0 \coloneqq \xi \in \{0,1\}^{\Z^d}$, the value of $X_t(x)$ for any $t > 0$ and $x \in \Z^d$ is defined as follows. For a given $x$, those $t$ at which the value of $X_t(x)$ may change are precisely those satisfying $(x,t) \in \pi(\mathscr{P})$. Given such a $t$ and the corresponding element $(x,t,u,w) \in \mathscr{P}$, the value of $X_t(x)$ is defined from the value of $X_{t-}$ according to the following rules: 
\begin{itemize}
\item if $w = B$, then $X_t(x)\coloneqq 0$; 
\item if  $w = A$, then:
 \begin{itemize}
 \item if $u \in [0,c_0(\tau_x X_{t-})[$, then $X_t(x) \coloneqq0$; 
 \item if $u \in [c_0(\tau_x X_{t-}), C_0 + C_1 - c_1(\tau_x X_{t-})[$, then $X_t(x) \coloneqq X_{t-}(x)$; 
 \item  if $u \in [C_0 + C_1 - c_1(\tau_x X_{t-}),C_0 + C_1]$, then $X_t(x) \coloneqq 1$.
 \end{itemize}
 \end{itemize}

\begin{prop} Let $h\in[0,1].$ The graphical construction is well-defined with probability one simultaneously for all $x\in\mathbb{Z}^d,t \geq 0,\xi\in\{0,1\}^{\mathbb{Z}^d}$.
    Moreover,  $X_t = (X_t(x))_{x \in \Z^d}$ is a continuous-time Markov process on $\{0,1\}^{\Z^d}$, starting at $X_0 = \xi$, and whose infinitesimal generator is given by \eqref{e:def-Lh}.
\end{prop}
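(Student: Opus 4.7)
The plan is to follow the classical Harris-type graphical construction: build the trajectory pathwise via a backward ancestry argument, then read off the Markov property and the generator formula from the Poisson statistics. As a preliminary observation, for every $x \in \Z^d$ and $T > 0$ the number of atoms of $\mathscr{P}$ projecting onto $\{x\} \times [0,T]$ is Poisson with mean $MT$, hence almost surely finite; the subtle point is to combine infinitely many such sequences of updates into a single consistent trajectory simultaneously for every initial configuration.

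I would introduce the oriented \emph{ancestry graph} whose vertices are the atoms of $\mathscr{P}$ together with formal initial vertices $(y,0)$ for $y \in \Z^d$, and where from an atom $(x,t,u,w)$ one draws at most $|\Lambda_R|$ edges to the most recent atom (or initial vertex) at each $y \in x + \Lambda_R$ with $y$-time strictly less than $t$. The crucial step is to show that the set of backward ancestors of any atom within $[0,T]$ is almost surely finite. I would obtain this by partitioning $[0,T]$ into subintervals of length $\Delta$ chosen so that $M \Delta |\Lambda_R| < 1$: on each subinterval the expected number of length-$n$ ancestry paths from a given vertex is at most $(M \Delta |\Lambda_R|)^n$, so the ancestry restricted to the subinterval is dominated by a subcritical branching process and is almost surely finite by Borel--Cantelli; concatenating the $\lceil T/\Delta \rceil$ pieces yields finiteness on $[0,T]$. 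Once ancestry finiteness is established, $X_t(x)$ is defined unambiguously by induction on ancestry depth using the update rules of Section~\ref{sec_cons}, with base case given by $\xi$ at the initial vertices. Since the ancestry graph depends only on $\mathscr{P}$ and not on $\xi$, and $\mathscr{P}$ has countably many atoms almost surely, the construction works simultaneously for every $(x,t,\xi)$ on a single event of full probability.

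With the pathwise construction in hand, the Markov property follows from the independence of $\mathscr{P}$ restricted to disjoint time strata: conditional on $X_s$, the future evolution is a measurable function of $\mathscr{P} \cap (\Z^d \times [s,\infty) \times [0,M] \times \{A,B\})$, which is independent of the past. To identify the generator, I apply the construction over a vanishing horizon $[0,\delta]$ to a cylinder function $f$ depending on coordinates in a finite set $F \subset \Z^d$: the probability of two or more atoms in $(F + \Lambda_R) \times [0,\delta] \times [0,M] \times \{A,B\}$ is $O(\delta^2)$, while each single-atom event contributes, after integrating out $u$ and summing the $A$/$B$ alternatives according to the weights in $\lambda_h$, exactly a flip rate of $(1-h) c_0(\tau_x \xi) + M h$ toward $0$ when $\xi(x)=1$ and a flip rate of $(1-h) c_1(\tau_x \xi)$ toward $1$ when $\xi(x)=0$. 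Summing over $x \in F + \Lambda_R$ and letting $\delta \to 0$ recovers $L_h f(\xi)$ as in \eqref{e:def-Lh}. The main obstacle I expect is the ancestry finiteness step, since a naive branching bound with mean $M T |\Lambda_R|$ is supercritical for large $T$; exploiting short-time subcriticality on subintervals of length $\Delta < (M|\Lambda_R|)^{-1}$ and concatenating is what makes the argument go through. A more streamlined alternative would be to invoke the general existence theorem for IPS with bounded finite-range rates (e.g.\ Theorem I.3.9 of \cite{Lig}) to obtain the Markov process with generator $L_h$ directly, and then verify that the graphical construction above is a version of it by matching the one-step transition rates.
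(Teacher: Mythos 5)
Your proposal is correct in substance and reaches the same conclusion, but it handles the central finiteness step by a different (and somewhat more laborious) mechanism than the paper. The paper defines the backward set-valued exploration $(I_t^T)$ and, in Lemma~\ref{lem : taille It}, compares its evolution directly to a continuous-time Markov branching random walk that branches at rate $M$ and replaces each particle by $|\Lambda_R|$ offspring; it then invokes Athreya--Ney to get that the expected population at time $T$ equals $e^{M(|\Lambda_R|-1)T}<\infty$, hence the exploration a.s.\ terminates --- for \emph{any} $T$, with no subdivision of the time axis. Your worry that ``a naive branching bound with mean $MT|\Lambda_R|$ is supercritical for large $T$'' is a red herring: a supercritical continuous-time Markov branching process is still a.s.\ finite at every finite time (no explosion), which is exactly what the paper exploits. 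Even in your own framework, the short-interval constraint $M\Delta|\Lambda_R|<1$ is unnecessary: an ancestry path of length $n$ forces $n$ atoms in strictly decreasing time order within a window of length $\Delta$, and counting ordered $n$-tuples of such atoms (via Mecke's formula) produces a $1/n!$, giving an expected path count of order $(M\Delta|\Lambda_R|)^n/n!$ whose sum over $n$ converges for every $\Delta$; the piecewise concatenation you describe then becomes superfluous. That said, the short-time-subcriticality-plus-concatenation scheme you outline is standard and does go through with a bit of care about the correlations across branches, so this is an over-complication rather than an error. Your generator identification over a vanishing horizon $[0,\delta]$ --- reading off flip rate $(1-h)c_0(\tau_x\xi)+Mh$ to $0$ and $(1-h)c_1(\tau_x\xi)$ to $1$ --- is correct and matches~\eqref{e:def-Lh}; the paper instead simply delegates the Markov property and generator computation to Proposition~2.7 of Swart's lecture notes. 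Your ``streamlined alternative'' via Liggett's existence theorem (construct the semigroup abstractly, then match the graphical construction's one-step rates) is also a valid route and is essentially what the paper's citation of Swart accomplishes.
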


To prove this proposition, one needs to ensure that with probability one, simultaneously for every $x,t,\xi,h$, the number of elements $(y,s,u,w)\in\mathscr{P}$ that are relevant to determine the value of $X_t(x)$ is finite (this will be a consequence of Lemma \ref{lem : taille It}). Moreover, with probability one, any pair of elements in $\mathscr{P}$ leads to two distinct values for the time variable. This allows one to unambiguously define $X_t(x)$ for all $t>0$, $x\in\mathbb{Z}^d$, $\xi\in\{0,1\}^{\mathbb{Z}^d}$. One can prove $(X_t)_{t \geq 0}$ is a Markov process with generator $L$ as in Proposition 2.7 of \cite{Swa}.

Now, given $x \in \Z^d$, $t \geq 0$ and $s \geq t$, we define ${}^{x,t,1}X_s$ to be the value of $X_s$ obtained by altering the previous construction, forcing the value of $X_t(x)$ to be $1$. For $s<t$, we just let ${}^{x,t,1}X_s\coloneqq X_s$. We define ${}^{x,t,0}X_s$ in a similar way.

 The following monotonicity properties are straightforward consequence of the graphical construction.
\begin{lem}\label{lem: monotonicity in xi}Let $t\ge 0$ and $x\in\Z^d$. The resulting $X_t(x)$ is monotonically increasing as a (random) function of the initial configuration $\xi$.  Moreover, we have almost surely for any $s\ge 0$ that ${}^{x,t,0}X_s \leq X_s \leq {}^{x,t,1}X_s$.
\end{lem}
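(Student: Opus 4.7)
The plan is to prove both statements by a joint coupling argument on the single Poisson point process $\mathscr{P}$, proceeding by induction on the finite set of relevant atoms (finiteness being guaranteed by the forthcoming Lemma on $|I_t|$). For the first statement, fix $\xi \leq \eta$ and build the two processes $X^\xi$ and $X^\eta$ from the \emph{same} $\mathscr{P}$. I would show inductively, along the chronological sequence of atoms in the light-cone of $(x,t)$, that $X^\xi_{s-} \leq X^\eta_{s-}$ implies $X^\xi_s \leq X^\eta_s$. Since all coordinates $y \neq x'$ of both configurations are unchanged at an atom at site $x'$, everything reduces to checking the update rule at $x'$.

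For the update step, fix an atom $(x', s, u, w) \in \mathscr{P}$ and set $a_\xi \coloneqq c_0(\tau_{x'} X^\xi_{s-})$, $a_\eta \coloneqq c_0(\tau_{x'} X^\eta_{s-})$, $b_\xi \coloneqq c_1(\tau_{x'} X^\xi_{s-})$, $b_\eta \coloneqq c_1(\tau_{x'} X^\eta_{s-})$. Monotonicity (i) combined with $X^\xi_{s-} \leq X^\eta_{s-}$ yields $a_\xi \geq a_\eta$ and $b_\xi \leq b_\eta$, hence the threshold ordering
\[ 0 \leq a_\eta \leq a_\xi \leq C_0+C_1-b_\eta \leq C_0+C_1-b_\xi \leq C_0+C_1. \]
If $w=B$, both processes set $X_s(x')=0$ and the inequality is trivially preserved. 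If $w=A$, I would run through the five intervals into which the thresholds partition $[0,M]$: in the extreme cases both values are equal ($0$ or $1$); in the middle interval both values are unchanged, so the hypothesis gives the conclusion; and in the two transitional intervals, $X^\xi$ stays below $X^\eta$ since one side is forced to $0$ or the other to $1$. This case analysis is elementary but is the only non-trivial step; the main (mild) obstacle is simply keeping the bookkeeping of the five intervals straight.

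For the second statement, the inequalities are immediate for $s<t$ since ${}^{x,t,i}X_s = X_s$ by definition. At $s=t$, the forcing gives ${}^{x,t,0}X_t(x)=0 \leq X_t(x) \leq 1 = {}^{x,t,1}X_t(x)$ while all other coordinates coincide, so ${}^{x,t,0}X_t \leq X_t \leq {}^{x,t,1}X_t$. For $s>t$, I would re-apply verbatim the inductive argument of the previous paragraph: the three processes share the same Poisson atoms beyond time $t$, and the same case analysis at every subsequent update shows that the ordering of configurations is preserved through each update, yielding the claim for all $s \geq 0$ simultaneously on a probability-one event.
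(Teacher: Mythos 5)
Your proof is correct and fills in, cleanly and completely, exactly the argument the paper has in mind when it says the lemma is a ``straightforward consequence of the graphical construction'' without giving details. The coupling on a common Poisson process, the chronological induction over the finitely many relevant atoms, and the five-interval case analysis (valid because $a_\xi \leq C_0 \leq C_0+C_1-b_\eta$, so the thresholds nest as you claim) are precisely what makes the monotonicity ``straightforward,'' and you correctly reuse the same induction for the second inequality after observing that the forced configuration at time $t$ is dominated coordinatewise.
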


\subsubsection{Bound on the set of possibly influential sites}
\label{sec_influence}

Given $T > 0$, we now define a family of subsets of $\Z^d$, denoted 
$(I_t^T)_{t \in [0,T]}$ so that ``$I_t^ T$ contains every site whose state at time $t$ may influence the state of the origin at time $T$''. The goal of this section is to obtain upper bounds on the size of $I_t^ T$.

 The family $(I_t^T)_{t \in [0,T]}$ is defined by going backward in time in the following way. Start with $I_{T}^T \coloneqq \{ 0 \}$, and let $t_0 \coloneqq T$. For $j \geq 0$, assuming that $t_j$ is defined and that $I_t^T$ is defined for $t_j \leq t \leq T$, define $t_{j+1} \coloneqq \sup \{ t \in ]0, t_j[ \, | \,  (x,t) \in \pi(\mathscr{P}) \mbox{ for some } x \in I^T_{t_j}  \}$, with the convention that $\sup \emptyset = 0$. Set $I_t^T \coloneqq I_{t_j}^T$ for $t \in ]t_{j+1},t_j[$. If $t_{j+1}>0$, set $I^T_{t_{j+1}} \coloneqq I^T_{t_j} \cup (x_{j+1} + \Lambda_R)$, where $x_{j+1}$ is the (a.s. unique) site realizing the supremum in the definition of $t_{j+1}$, and, in the case where $t_{j+1}=0$, let $I^T_{t_{j+1}}\coloneqq I^T_{t_j}$.

The reason we are interested in these sets is the following lemma  which says that the sites which may influence the outcome of $X_T(0)$ are included in the set of influential sites $I^T_0.$   We postpone its proof to the end of this section.
\begin{lem}\label{lem: pivot}Let $x\in\Z^d$.  Almost surely, if $x \notin I^T_0$, then, for all $t \in [0,T],$ ${}^{x,t,0}X_T(0) = {}^{x,t,1}X_T(0)$.
    
\end{lem}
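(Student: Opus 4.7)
The plan is to contrast a forward propagation set $F$, tracking how a forced modification at $(x,t)$ can spread, with the backward cone $I^T$, and to show that these two sets remain disjoint throughout $[t,T]$.

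First, I would define $F_u \subseteq \Z^d$ for $u \in [t,T]$ inductively: set $F_t \coloneqq \{x\}$, and at every Poisson point $(z,u') \in \pi(\mathscr{P})$ with $u' \in (t,T]$ satisfying $(z+\Lambda_R) \cap F_{u'-} \neq \emptyset$, add $z$, i.e.\ set $F_{u'} \coloneqq F_{u'-} \cup \{z\}$; otherwise $F$ is unchanged. A routine induction over the relevant Poisson points, using the update rules of Section~\ref{sec_cons}, then yields ${}^{x,t,i}X_u(w) = X_u(w)$ for every $w \notin F_u$, $u \in [t,T]$, and $i \in \{0,1\}$: an update at a site $z$ can differ from the unperturbed dynamics only if its update neighborhood $z+\Lambda_R$ contains an already-affected site.

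The heart of the argument is then the invariant
\[F_u \cap I^T_u = \emptyset \qquad \text{for all } u \in [t,T],\]
proved by induction on the (almost surely finite) set of times at which either $F$ or $I^T$ jumps. The base case at $u=t$ follows from $F_t = \{x\}$ and $x \notin I^T_0 \supseteq I^T_t$. Jumps of $I^T_u$ in the forward direction only shrink the set, so they cannot break the invariant. The only danger is a Poisson point $(z,u)$ with $z \in I^T_u$ that would be added to $F_u$; ruling this out is the main obstacle, and it relies on a duality between the backward construction of $I^T$ and the forward propagation of $F$. By the very definition of $t_{j+1}$ as a supremum, no Poisson point at a site of $I^T_u = I^T_{t_j}$ occurs for $u \in (t_{j+1},t_j)$. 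At $u = t_j$, the only Poisson point at a site of $I^T_{t_j}$ sits at $x_j$, and its incorporation into $F_{t_j}$ would demand $(x_j+\Lambda_R) \cap F_{t_j-} \neq \emptyset$; but $x_j + \Lambda_R \subseteq I^T_{t_j} = I^T_{t_j-}$ precisely because of the backward step that enlarged $I^T$ at $t_j$, so the inductive hypothesis forces this intersection to be empty.

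Evaluating the invariant at $u=T$ with $0 \in I^T_T$ gives $0 \notin F_T$, hence ${}^{x,t,0}X_T(0) = X_T(0) = {}^{x,t,1}X_T(0)$ as required.
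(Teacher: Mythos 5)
Your argument is correct and takes essentially the same route as the paper: the paper's sketch runs a forward-in-time induction showing that ${}^{x,t,0}X_s$ and ${}^{x,t,1}X_s$ coincide on $I^T_s$, and your invariant $F_u\cap I^T_u=\emptyset$ (with $F_u$ an explicit superset of the discrepancy set) is just a dual phrasing of the same statement, with the key exclusion step---the Poisson atom at $(x_j,t_j)$ cannot enter $F$ because $x_j+\Lambda_R\subseteq I^T_{t_j-}$ is disjoint from $F_{t_j-}$ by induction---spelled out in the same way the paper intends.
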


By comparing the dynamics of $I_t^T$ with a branching random walk, one can obtain the following localization result of $I_0^T$ around the origin.
\begin{lem}\label{lem : taille It}There exist constants $\kappa_1\geq 1$, $\kappa_2>0$ depending only on $T$, $M$ and $R$ such that,  for any $x\in\mathbb Z^d$, 
\begin{equation}\label{e:controle-spread}
    \P_h(x\in I_0^T)\le \kappa_1\exp(-\kappa_2 \|x\|_\infty).
\end{equation}
    \end{lem}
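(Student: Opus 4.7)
My plan is to dominate the backward exploration by a continuous-time branching random walk on $\Z^d$ and to control its spatial spread via an exponential moment estimate. It is convenient to reverse time: set $J_s \coloneqq I_{T-s}^T$ for $s \in [0,T]$, so $J_0 = \{0\}$, $J_T = I_0^T$, and $(J_s)_{s \in [0,T]}$ is a pure-jump Markov process on the finite subsets of $\Z^d$. Since $\pi(\mathscr{P})$ has intensity $M$ per unit site-time, each $x \in J_s$ triggers an event at rate $M$, upon which $J_s$ is updated to $J_s \cup (x+\Lambda_R)$; in particular, at most $(2R+1)^d$ new sites, each within $\ell^\infty$-distance $R$ of $x$, are added.

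Fix $\lambda>0$ and consider the functional
\[ W_s \coloneqq \sum_{y \in J_s} e^{\lambda \|y\|_\infty}. \]
A direct computation of the generator $\mathcal L$ of $(J_s)$ applied to $W$ yields, for any finite $J \subset \Z^d$,
\[ \mathcal L W(J) \;=\; \sum_{x \in J} M \!\sum_{z \in (x+\Lambda_R)\setminus J}\! e^{\lambda \|z\|_\infty} \;\leq\; M \sum_{x \in J}\sum_{z \in \Lambda_R} e^{\lambda\|x+z\|_\infty} \;\leq\; M(2R+1)^d e^{\lambda R}\, W(J), \]
where the final inequality uses $\|x+z\|_\infty \leq \|x\|_\infty + R$ for $z \in \Lambda_R$. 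Together with $W_0 = 1$, a standard Dynkin argument and Grönwall's lemma then give
\[ \mathbb{E}_h[W_T] \;\leq\; \exp\!\bigl(M(2R+1)^d e^{\lambda R}\, T\bigr). \]

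To conclude, I would observe that on $\{x \in I_0^T\} = \{x \in J_T\}$ one has $W_T \geq e^{\lambda \|x\|_\infty}$, so Markov's inequality gives
\[ \mathbb{P}_h(x \in I_0^T) \;\leq\; e^{-\lambda\|x\|_\infty}\, \mathbb{E}_h[W_T] \;\leq\; \exp\!\bigl(M(2R+1)^d e^{\lambda R} T\bigr)\, e^{-\lambda\|x\|_\infty}, \]
and taking $\lambda = 1$ yields the result with $\kappa_2 \coloneqq 1$ and $\kappa_1 \coloneqq \exp(M(2R+1)^d e^{R} T)$. The main technical obstacle is the justification of the differential inequality on $\mathbb{E}_h[W_s]$: since $W_s$ involves an infinite exponential weight and the state space of $J_s$ is unbounded, one must first localize, for instance by stopping at the hitting time $\tau_n$ of $\{|J_s| \geq n\}$ (which is almost surely finite but tends to $\infty$, as events accumulate at finite rate $M|J_s|$), derive the inequality for $W_{s \wedge \tau_n}$ where everything is integrable, and then let $n \to \infty$ via monotone convergence. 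Everything else is routine.
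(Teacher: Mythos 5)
Your argument is correct, and it reaches the same conclusion as the paper but by a genuinely different technical route. The paper also dominates the backward exploration by a continuous-time branching random walk, but then invokes a many-to-one formula to reduce the expected occupation number at a site $x$ to a single-particle quantity $e^{M(|\Lambda_R|-1)T}\,\P(\beta_T=x)$ for an auxiliary random walk $\beta$, and finishes with a Poisson tail estimate on the number of jumps of $\beta$. You instead work directly with the exponential weight $W_s=\sum_{y\in J_s}e^{\lambda\|y\|_\infty}$ of the explored set, bound the generator by $M(2R+1)^d e^{\lambda R}W$, and close with Gr\"onwall and Markov. Your version is more self-contained (no appeal to a many-to-one formula) at the price of a less sharp constant $\kappa_1$, which is immaterial here since the lemma only claims existence of constants depending on $T,M,R$. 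Your discussion of the localization via $\tau_n$ is exactly the right thing to flag: on $[0,\tau_n]$ the number of jumps, hence both $|J_s|$ and $\max_{y\in J_s}\|y\|_\infty$, are bounded, so $W_{s\wedge\tau_n}$ is bounded and Dynkin applies; finiteness of the jump count on $[0,T]$ follows from domination by a non-explosive Yule process; and $W_{T\wedge\tau_n}\uparrow W_T$ lets monotone convergence pass the bound to the limit. One small presentational remark: you announce a domination by a branching random walk but in fact compute directly with the set-valued process $J_s$; the domination is implicit in the generator inequality, where you replace $(x+\Lambda_R)\setminus J$ by $x+\Lambda_R$, which is the step that decouples the offspring and reduces to the branching comparison.
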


\begin{proof}

One can compare the dynamics of $I_t^T$ as $t$ decreases from $T$ to $0$ with the spread of a branching random walk in continuous time, which branches at rate $M$, and such that, when a particle located at site $y$ branches, it dies and new particles located at sites $y+ \Lambda_R$ are added to the current population. (This comparison amounts to ignoring the fact that $I^T_{t_j}$ may already contain sites in $x_{j+1}+ \Lambda_R$, thus providing an upper bound on the original process.)
 We first observe that there almost surely exists a $j$ such that $t_{j+1}=0$, as a consequence of classical results, e.g. Equation (4) page 108 Chapter III in \cite{AthNey}, which shows that the total number of particles in the branching process after time $T$ has a finite expected value equal to $e^{M (| \Lambda_R|-1) T}$, where for any set $\mathcal{E}$, we denote by $|\mathcal{E}|$ the cardinal of $\mathcal{E}$.

 Let us now derive the localization result on $I_0^T$ around the origin. Using a suitable many-to-one formula (see e.g. Proposition 3.3 in \cite{BanDelMarTra}), we have that the expected number of particles after time $T$ at a site $x$ is equal to $e^{M (|\Lambda_R| -1)T} \cdot \P(\beta_T = x)$, where $(\beta_t)_{t \geq 0}$ is a continuous-time random walk on $\Z^d$ starting at $0$ with constant jump-rate equal to $M | \Lambda_R|$, and whose jump distribution is the uniform distribution on $\Lambda_R$. 
Since the total number of jumps performed by the random walk follows a Poisson distribution with parameter $M |\Lambda_R| T$, and since the jump distribution is deterministically bounded, \eqref{e:controle-spread} is easily deduced from classical tail estimates on the Poisson distribution.

\end{proof}

 \begin{proof}[Proof of Lemma  \ref{lem: pivot}] The proof of Lemma \ref{lem : taille It} ensures there a.s. exists $j$ so that $t_{j+1}=0$. One can prove Lemma \ref{lem: pivot} by showing by induction on decreasing $i$ (which means that time goes forward) that for any $x \notin I^T_0$, $t \in [0,T[$, then for all $s\in[t_{i+1},t_{i}]$ the configurations ${}^{x,t,0}X_s$ and ${}^{x,t,1}X_s$ coincide on $I_s^T$. 
\end{proof}

\subsection{Properties of $\theta_T$ and pivotality}\label{sec:derivative_theta}

Given a time-horizon $T \geq 0$, we define \[\theta_T(h) \coloneqq \P_h(X_T(0) = 1),\] where the initial condition is $X_0 = \mathbf{1}$. In this section, we will prove regularity properties of $\theta_T(h)$ and give an expression for its derivative. Since there are no readily available formulas for the derivative when working with infinite measures, we will first compute the derivative for a truncated version of the model and then prove that the latter converges towards $\theta'_T(h)$.

\begin{prop}\label{prop_theta(T)}
For fixed $h$, $T \mapsto \theta_T(h)$ is a continuous and monotonically non-increasing function. 
\end{prop}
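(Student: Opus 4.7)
The plan is to prove monotonicity and continuity separately, both leveraging the graphical construction of Section 2.1.

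For monotonicity in $T$, I would fix $T_1 < T_2$ and compare, on the same Poisson point process $\mathscr{P}$, the process $(X_t)_{t \in [0, T_2]}$ started from $\mathbf{1}$ at time $0$ with an auxiliary process $(Y_t)_{t \in [T_2 - T_1, T_2]}$ started from $\mathbf{1}$ at time $T_2 - T_1$, driven by the same atoms. Since the graphical construction on $[T_2 - T_1, T_2]$ depends only on the restriction of $\mathscr{P}$ to that slab, and since $X_{T_2 - T_1} \leq \mathbf{1} = Y_{T_2 - T_1}$, the pointwise monotonicity in the initial condition provided by Lemma \ref{lem: monotonicity in xi} forces $X_{T_2}(0) \leq Y_{T_2}(0)$ almost surely. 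Because $\lambda_h$ is a product that involves Lebesgue measure in time and is therefore time-translation invariant, the law of $Y_{T_2}(0)$ coincides with that of $X_{T_1}(0)$, and taking expectations yields $\theta_{T_2}(h) \leq \theta_{T_1}(h)$.

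For continuity at a fixed $T \geq 0$, the plan is to establish the almost sure convergence $X_{T'}(0) \to X_T(0)$ as $T' \to T$, and then conclude by bounded convergence since $X_t(0) \in \{0,1\}$. The key observation is that $X_t(0)$ can change value only at times $t$ with $(0, t) \in \pi(\mathscr{P})$, and these times form almost surely a locally finite subset of $\R_+$ (a Poisson process of rate $M$ on the time axis, obtained by projecting the atoms above the single site $0$). Since the marginal of $\lambda_h$ on the time coordinate is absolutely continuous, the event $\{(0, T) \in \pi(\mathscr{P})\}$ has probability zero for any deterministic $T$, so almost surely there exists a random $\delta > 0$ such that no atom at site $0$ falls in the window $(T - \delta, T + \delta)$; on this event $t \mapsto X_t(0)$ is constant equal to $X_T(0)$ throughout that window, giving the desired convergence.

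Neither step presents a serious obstacle: monotonicity reduces to Lemma \ref{lem: monotonicity in xi} combined with the time-stationarity of $\mathscr{P}$, while continuity reduces to the elementary fact that Poisson atoms avoid any fixed deterministic time with probability one. The only bookkeeping point is to make the time-shift argument precise by noting that the restriction of $\mathscr{P}$ to $\Z^d \times [T_2 - T_1, T_2] \times [0, M] \times \{A, B\}$, translated by $-(T_2 - T_1)$, has the same distribution as the restriction of $\mathscr{P}$ to $\Z^d \times [0, T_1] \times [0, M] \times \{A, B\}$, which is immediate from the form of $\lambda_h$.
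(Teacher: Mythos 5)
Your proof is correct and follows essentially the same route as the paper: monotonicity comes from Lemma \ref{lem: monotonicity in xi} together with restarting the dynamics from $\mathbf{1}$ at time $T_2 - T_1$ (the paper phrases this via the Markov property, you via an explicit coupling with the time-shifted process, which is the same observation), and continuity rests on the fact that $X_t(0)$ can change only when a Poisson atom appears at site $0$. The only cosmetic difference is that the paper makes the continuity step quantitative, bounding $|\theta_{T_1}(h) - \theta_{T_2}(h)|$ by $1 - e^{-M|T_2 - T_1|}$, whereas you argue a.s.\ local constancy of $t \mapsto X_t(0)$ near the deterministic time $T$ and conclude by bounded convergence; both are valid.
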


\begin{proof}
We deal with continuity first. Observe that, by definition of the graphical construction, for $T_1 < T_2$, $\P_h(X_{T_2}(0) \neq X_{T_1}(0)) \leq \P_h(\pi(\mathscr{P}) \cap (\{ 0\} \times ]T_1,T_2])\neq \emptyset )) = 1- e^{-M(T_2-T_1)}$,
so $|\theta_{T_1}(h) - \theta_{T_2}(h) | \leq 1- e^{-M(T_2-T_1)}$.

We now deal with the monotonicity property. Consider $0 \leq T_1 \leq T_2$. By  Lemma \ref{lem: monotonicity in xi}, we have that $X_t(0)$ is monotonically increasing as a function of the initial condition, so that $\P_h(X_t(0)=1)$ is a monotonically increasing function of the initial condition as well. Observing that, starting from $X_0 = \mathbf{1}$, we have $X_{T_2-T_1} \leq \mathbf{1}$, and applying the Markov property, we deduce that $\P_h(X_{T_2}(0) = 1 | X_{T_2-T_1}) \leqps  \theta_{T_1}(h)$, so that, taking the expectation, we deduce that $\theta_{T_2}(h) \leq \theta_{T_1}(h)$.    
\end{proof}

Let us now introduce the key notion of pivotality that will appear in the expression of the $h$-derivative of $\theta_T$. We say that a triple $(x,t,u)$, where $x \in \Z^d$, $t \in ]0,T]$, and $u \in [0,M]$, is $T-$pivotal, when, starting with $X_0 = \mathbf{1}$, the following conditions are met:
\begin{itemize}
\item[$(a)$] $X_t(x)[\mathscr{P} \cup \{ (x,t,u,A) \}] = 1$;
\item[$(b)$] ${}^{x,t,0}X_T(0) = 0$ and ${}^{x,t,1}X_T(0) = 1$;
\end{itemize}
where the notation $X_t(x)[\mathscr{P} \cup \{ (x,t,u,A) \}]$ denotes the value of $X_t(x)$ produced by the graphical construction when $\mathscr{P}$ is replaced by $\mathscr{P} \cup \{ (x,t,u,A) \}$. Condition $(a)$ above is necessary to ensure that changing the mark from $A$ to $B$ at $(x,t,u)$ changes the value of $X_t(x)$.

 Thanks to Lemma \ref{lem: pivot}, if $(x,t,u)$ is $T-$pivotal, then almost surely $x \in I^T_0$, so,
\begin{equation}\label{e:queue-pivotÉ} \P_h ((x,t,u) \mbox{ is $T-$pivotal}) \leq \P_h(x\in I_0^T).\end{equation}
Moreover, 
using \eqref{e:controle-spread}, we have that 
\begin{equation}\label{e:queue-pivot} \P_h ((x,t,u) \mbox{ is $T-$pivotal}) \leq \kappa_1 \exp(-\kappa_2 ||x||_{\infty}).\end{equation}

\begin{lem}\label{l:ppivot-continue-h}
For fixed $T,x,t,u$, the probability $\P_h ((x,t,u) \mbox{ is $T-$pivotal})$ is a continuous function of $h$.
\end{lem}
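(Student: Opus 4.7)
The strategy is to put all the Poisson processes $\{\mathscr{P}_h\}_{h \in [0,1]}$ on a single probability space via a monotone coupling, then use a localization argument to reduce the pivotality event to a bounded space-time region on which the coupling is stable as $h$ varies. Let $\mathscr{Q}$ be a Poisson point process on $\Z^d \times \R_+ \times [0,M] \times [0,1]$, on some space $(\widetilde{\Omega}, \widetilde{\mathcal{F}}, \widetilde{\P})$, with intensity $M \cdot (\mathrm{counting} \otimes \mathrm{Lebesgue} \otimes \mathrm{uniform}_{[0,M]} \otimes \mathrm{Lebesgue}_{[0,1]})$. For each $h \in [0,1]$, construct $\mathscr{P}_h$ by declaring an atom $(y,s,v,\rho) \in \mathscr{Q}$ to carry mark $A$ if $\rho \leq 1-h$ and mark $B$ otherwise; then $\mathscr{P}_h$ has intensity $\lambda_h$. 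Writing $E_h$ for the event that $(x,t,u)$ is $T$-pivotal for the graphical construction built from $\mathscr{P}_h$, we have $\widetilde{\P}(E_h) = \P_h((x,t,u) \text{ is } T\text{-pivotal})$ and $|\widetilde{\P}(E_h) - \widetilde{\P}(E_{h'})| \leq \widetilde{\P}(E_h \triangle E_{h'})$, so continuity reduces to bounding this symmetric-difference probability.

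For the localization step, note that the family $(I_s^T)_{s \in [0,T]}$ depends only on $\pi(\mathscr{Q})$, so its distribution does not depend on $h$. By Lemma \ref{lem: pivot}, condition $(b)$ of pivotality is determined by $\mathscr{P}_h$ restricted to $I_0^T \times [0,T] \times [0,M]$. Condition $(a)$, which computes $X_t(x)[\mathscr{P}_h \cup \{(x,t,u,A)\}]$, is similarly determined by $\mathscr{P}_h$ restricted to $J_0^{x,t} \times [0,t] \times [0,M]$, where $J_0^{x,t}$ denotes the backward influence set of $(x,t)$, defined exactly like $I_0^T$ but starting from $J_t^{x,t} = \{x\}$; this set is again a function of $\pi(\mathscr{Q})$, and obeys the same exponential tail bound as $I_0^T$ by the branching random walk comparison in the proof of Lemma \ref{lem : taille It}. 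Setting $\Omega_N := \{I_0^T \cup J_0^{x,t} \subset \Lambda_N\}$, we have $\widetilde{\P}(\Omega_N^c) \to 0$ as $N \to \infty$ (and $\Omega_N$ is independent of $h$), and on $\Omega_N$ the event $E_h$ depends only on the restriction of $\mathscr{P}_h$ to $\Lambda_N \times [0,T] \times [0,M]$.

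To conclude, under the coupling $\mathscr{P}_h$ and $\mathscr{P}_{h'}$ assign the same mark to an atom $(y,s,v,\rho) \in \mathscr{Q}$ unless $\rho$ lies in the interval between $1-h$ and $1-h'$, which has length $|h-h'|$. Hence the number of atoms in $\Lambda_N \times [0,T] \times [0,M]$ where the two mark configurations disagree is Poisson with parameter $M |\Lambda_N| T |h-h'|$, and the event $D_N^{h,h'}$ that no such atom exists has probability $e^{-M |\Lambda_N| T |h-h'|}$. On $\Omega_N \cap D_N^{h,h'}$ the restrictions of $\mathscr{P}_h$ and $\mathscr{P}_{h'}$ to $\Lambda_N \times [0,T] \times [0,M]$ coincide, so $E_h = E_{h'}$ there, and
$$
\widetilde{\P}(E_h \triangle E_{h'}) \leq \widetilde{\P}(\Omega_N^c) + \bigl(1 - e^{-M |\Lambda_N| T |h-h'|}\bigr).
$$
Given $\varepsilon > 0$, one first chooses $N$ so that the first term is $< \varepsilon/2$, then $|h-h'|$ small enough so the second is $< \varepsilon/2$, which yields continuity. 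The main point requiring care is the localization step, specifically introducing the auxiliary backward influence set $J_0^{x,t}$ and verifying that the branching random walk argument of Lemma \ref{lem : taille It} applies to it verbatim; once this is established, the continuity statement reduces to the standard continuity of Poisson thinning on bounded regions.
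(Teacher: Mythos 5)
Your proof is correct and uses the same coupling of all the $\mathscr{P}_h$ through a single Poisson process with $[0,1]$-valued auxiliary marks as the paper does, but it diverges from the paper in how it converts this coupling into a continuity statement. The paper argues pointwise: since the influence cone $\cup_s I^T_s$ is a function of $\pi(\mathscr{Q})$ only (marks never enter its construction), the value of the indicator $\un_{V(x,t,u,h)}$ depends on $h$ only through finitely many atoms, each of whose $\{A,B\}$-marks is a locally constant function of $h$ away from the (a.s. avoided) value where the auxiliary mark equals $h$; hence $\un_{V(x,t,u,h)} \to \un_{V(x,t,u,h_0)}$ a.s. and dominated convergence finishes. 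You instead bound $\P_h - \P_{h'}$ directly by a symmetric-difference estimate, truncating to $\Lambda_N \times [0,T]$ via the event $\Omega_N$ and using the thinning bound $1 - e^{-M|\Lambda_N|T|h-h'|}$. This is slightly heavier but actually yields a quantitative modulus of continuity (even uniform continuity on $[0,1]$), which the paper's argument does not; it is a clean alternative route and correct.

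Two small points to tighten. First, when you define the backward influence set $J^{x,t}_0$ for condition $(a)$, you should initialize it with $J^{x,t}_{t} = x + \Lambda_R$ rather than $\{x\}$: the value $X_t(x)[\mathscr{P}_h \cup \{(x,t,u,A)\}]$ is read off from $X_{t^-}$ on all of $x+\Lambda_R$, since the inserted atom at time $t$ queries $c_0(\tau_x X_{t^-})$ and $c_1(\tau_x X_{t^-})$; starting the backward cone from $\{x\}$ would miss the $\Lambda_R$-neighborhood until the first Poisson point at $x$ below time $t$. The branching random walk comparison of Lemma~\ref{lem : taille It} applies verbatim with this corrected initialization; the resulting tail bound is centered at $x$ rather than at $0$, which is harmless since $x$ is fixed. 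Second, strictly speaking the claim that ``on $\Omega_N$ the event $E_h$ depends only on marks in $\Lambda_N \times [0,T]$'' uses that both cones, which govern which atoms are read for conditions $(a)$ and $(b)$, are built from $\pi(\mathscr{Q})$ alone and lie inside $\Lambda_N \times [0,T]$ on $\Omega_N$; this is exactly what you want, but it is worth stating explicitly that the cone is mark-independent, since that fact is what makes $\Omega_N$ itself independent of $h$ and is the linchpin of the whole argument.
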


\begin{proof}In this proof, we need to couple the graphical constructions corresponding to various values of $h$. To do so, we work with uniform random marks in $[0,1]$ instead of $\{A,B\}$ marks and deduce from them $\{A,B\}$ marks by comparing the $[0,1]$-valued marks with $h$.
More precisely, we consider a Poisson process $\mathscr{Q}$ on $\Z^d \times \R_+ \times [0,M]$ with intensity measure defined as $M($counting$\otimes$Lebesgue$\otimes$uniform$)$ and i.i.d. marks following the uniform distribution on $[0,1]$. For every $h \in [0,1]$, define $\mathscr{P}_h$ as the point process obtained from $\mathscr{Q}$ as follows: if $(y,s,v) \in \mathscr{Q}$ and the corresponding mark is $\geq h$, then $(y,s,v,A)  \in \mathscr{P}_h$, while if the corresponding mark is $<h$, then $(y,s,v,B) \in \mathscr{P}_h$. This way, $\mathscr{P}_h$ has exactly the law of $\mathscr{P}$ with respect to $\P_h$. 

Given $(x,t,u)$, define $V(x,t,u,h)$ as the event that $(x,t,u)$ is $T-$pivotal for the graphical construction based on the point process $\mathscr{P}_{h}$. For any $h_0 \in [0,1]$, we then have that, almost surely, $\un_{V(x,t,u,h)}$ converges to $\un_{V(x,t,u,h_0)}$ as $h \to h_0$. To see this, observe that the definition of the sets $I^T_t$ does not involve the marks, so the same set is obtained for every value of $h$. As a consequence, we only have to look at the effect of the limit $h \to h_0$ along a finite set of stochastic updates, for which the convergence is obvious provided that none of the $[0,1]$-valued marks in $\mathscr{Q}$ is exactly equal to $h_0$, this last condition being satisfied almost surely.  

The result then follows from Lebesgue's dominated convergence theorem.
\end{proof}

Using a similar coupling as in the proof above and noting it makes $X_T(0)$ non-increasing in $h$, one can prove the following monotonicity property.

\begin{lem}\label{lem: monotonicity in h}For fixed $T$, the function $h\mapsto \theta_T(h)$ is non-increasing.
    
\end{lem}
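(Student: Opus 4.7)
The plan is to reuse the coupling introduced in the proof of Lemma \ref{l:ppivot-continue-h}. Recall that from a Poisson process $\mathscr{Q}$ on $\Z^d \times \R_+ \times [0,M]$ endowed with i.i.d.\ uniform $[0,1]$-marks, one builds $\mathscr{P}_h$ by declaring each atom to carry the mark $B$ when its $[0,1]$-mark is $<h$, and $A$ otherwise. Let $X^h_t$ denote the configuration at time $t$ produced by the graphical construction based on $\mathscr{P}_h$, started from $X_0 = \mathbf{1}$. I would prove that for every $0 \leq h_1 \leq h_2 \leq 1$, almost surely $X^{h_1}_t(x) \geq X^{h_2}_t(x)$ for every $t \geq 0$ and every $x \in \Z^d$. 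Once this is established, $\{X^{h_1}_T(0) = 1\} \supseteq \{X^{h_2}_T(0) = 1\}$, so taking probabilities under the joint law of $\mathscr{Q}$ immediately yields $\theta_T(h_1) \geq \theta_T(h_2)$.

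The pathwise monotonicity is proved by induction along the (locally finite) sequence of update times. At an update $(y,s,v) \in \mathscr{Q}$ with $[0,1]$-mark $m$, three cases arise. If $m \geq h_2$, both processes see an $A$-mark; writing $\xi \coloneqq X^{h_1}_{s-} \geq \eta \coloneqq X^{h_2}_{s-}$ by the inductive hypothesis, assumption (i) gives $c_0(\tau_y \xi) \leq c_0(\tau_y \eta)$ and $c_1(\tau_y \xi) \geq c_1(\tau_y \eta)$, and a short case analysis on the position of $v$ among the four thresholds $c_0(\tau_y \xi)$, $c_0(\tau_y \eta)$, $C_0+C_1-c_1(\tau_y \xi)$, $C_0+C_1-c_1(\tau_y \eta)$ shows that the post-update values still satisfy $X^{h_1}_s(y) \geq X^{h_2}_s(y)$. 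If $h_1 \leq m < h_2$, the first process sees $A$ and updates as usual (new value $\geq 0$), while the second sees $B$ and sets the site to $0$. If $m < h_1$, both see $B$, and both new values are $0$. In every case the inequality is preserved.

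The only technical point is that there are infinitely many sites, so one must make sense of ``the induction''. This is handled by Lemma \ref{lem : taille It}: the values of $X^{h_1}_T(0)$ and $X^{h_2}_T(0)$ depend on only finitely many atoms of $\mathscr{Q}$, and the set of relevant atoms, together with the order in which they must be resolved, is entirely determined by $\pi(\mathscr{Q})$ and is therefore the same for $h_1$ and $h_2$. The induction thus runs along a common finite sequence of updates. I do not expect any real obstacle: the argument is a straightforward adaptation of the usual monotonicity in the initial condition (Lemma \ref{lem: monotonicity in xi}) to monotonicity in the parameter $h$.
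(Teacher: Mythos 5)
Your proposal is correct and takes exactly the approach the paper intends: the paper's proof is a one-liner invoking "a similar coupling as in the proof above and noting it makes $X_T(0)$ non-increasing in $h$," and you simply spell out that coupling together with the three-case verification and the finiteness argument that makes the induction well-posed. The case analysis at an $A$-$A$ update is the right one (the five intervals cut out by $0 \le c_0(\tau_y\xi) \le c_0(\tau_y\eta) \le M-c_1(\tau_y\xi) \le M-c_1(\tau_y\eta) \le M$ each preserve the inequality), and the observation that $I_0^T$ is determined by $\pi(\mathscr{Q})$ alone correctly handles the locality issue.
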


To prove the Russo formula, we introduce a truncated version of the dynamics, defined with the points in $\mathscr{S}_m^T \coloneqq \Lambda_m\times [0,T] \times [0,M] \times \{A , B \}$. We use the graphical construction with the truncated Poisson process  $\mathscr{P}_m^T \coloneqq\mathscr{P} \cap \mathscr{S}_m^T$, whose intensity measure is
$\lambda_{h,m}^T \coloneqq M \left(\mbox{counting} \otimes \mbox{Lebesgue} \otimes \mbox{uniform} \otimes ((1-h) \delta_{\{A\}} + h \delta_{\{B\}})\right).$

We define $X_t^m(x)$, for all $0 \leq t \leq T$ and $x \in \Lambda_m$ using a graphical construction based on $\mathscr{P}_{m}^{T}$, exactly as $X_t(x)$ is defined from $\mathscr{P}$, with the exception that we have to specify boundary conditions: as soon as the state at a site $x \notin\Lambda_m$ is needed in the construction, we declare it to be $0$.

We then define $\theta_T^m(h)$, and the notion of $T\stackrel{m}{-}$pivotality, analogously to the original (not truncated) case. Observe that, as in the original case (see Lemma \ref{lem: pivot}), the set $I^T_0$ also bounds the set of possible influential sites for the truncated dynamics.

\begin{lem}\label{limite-m}
We have that, for fixed $T,x,t,u$,
\begin{equation}\label{e:cv-theta-m}\lim_{m \to +\infty} \theta_T^m(h)  = \theta_T(h).\end{equation}
and that
\begin{equation}\label{e:cv-ppivot-m}\lim_{m \to +\infty} \P_h \left((x,t,u) \mbox{ is $T\stackrel{m}{-}$pivotal}\right) = \P_h \left((x,t,u) \mbox{ is $T-$pivotal} \right).\end{equation}
\end{lem}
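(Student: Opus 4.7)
The plan is to exhibit a high-probability event on which the truncated and full graphical constructions produce the same value of $X_T(0)$ and the same pivotality status, leveraging the localization provided by Lemma \ref{lem : taille It}.

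For the first limit, I would set $E_m \coloneqq \{ I_0^T \subset \Lambda_m\}$. A union bound combined with Lemma \ref{lem : taille It} gives
\[\P_h(E_m^c) \leq \sum_{y \notin \Lambda_m} \kappa_1 \exp(-\kappa_2 \|y\|_\infty) \xrightarrow[m \to \infty]{} 0.\]
The claim is that $X_T(0) = X_T^m(0)$ almost surely on $E_m$: by the very definition of $(I_t^T)_{t\in[0,T]}$, the value of $X_T(0)$ depends only on the restriction of the initial configuration to $I_0^T$ and on the atoms of $\mathscr{P}$ whose spatial coordinate lies in $I_t^T$ at their time $t$; on $E_m$ all such sites lie in $\Lambda_m$, the initial conditions of the full and truncated constructions agree on $\Lambda_m$, and the truncated construction retains all the relevant atoms, so the boundary value $0$ imposed outside $\Lambda_m$ is never consulted. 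Thus $|\theta_T(h) - \theta_T^m(h)| \leq \P_h(E_m^c) \to 0$, which yields \eqref{e:cv-theta-m}.

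The pivotality of $(x,t,u)$ is determined by the three quantities $X_t(x)[\mathscr{P} \cup \{(x,t,u,A)\}]$, ${}^{x,t,0}X_T(0)$, and ${}^{x,t,1}X_T(0)$. For the latter two, forcing the value of $X_t(x)$ only alters the dynamics at a single space-time point and does not change the backward exploration launched from $(0,T)$, so $I_0^T$ still contains all their influential sites. For the first quantity I would introduce $J_0^t(x)$, the backward exploration started from $(x,t)$ inside the augmented Poisson process $\mathscr{P} \cup \{(x,t,u,A)\}$, and show by the same branching-random-walk comparison as in the proof of Lemma \ref{lem : taille It} (translated so that $x$ plays the role of the origin, with the lone deterministic extra atom at $(x,t)$ contributing at most one additional branching event) that
\[\P_h(y \in J_0^t(x)) \leq \kappa_1' \exp(-\kappa_2' \|y-x\|_\infty)\]
for constants $\kappa_1',\kappa_2'$ depending only on $T$, $M$, $R$.

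With $E_m' \coloneqq \{I_0^T \subset \Lambda_m\} \cap \{ J_0^t(x) \subset \Lambda_m\}$, a union bound then gives $\P_h((E_m')^c) \to 0$ as $m \to \infty$ (for fixed $x, t, u$). Reapplying the argument above to each of the three quantities shows that on $E_m'$ they each coincide with their truncated counterparts, so $(x,t,u)$ is $T$-pivotal if and only if it is $T\stackrel{m}{-}$pivotal, from which \eqref{e:cv-ppivot-m} follows. The only genuinely delicate step will be the tail estimate for $J_0^t(x)$: since a single deterministic atom has been inserted into $\mathscr{P}$, I need to verify that it enlarges the exploration only locally, which is readily handled by slightly inflating the constants in the branching argument of Lemma \ref{lem : taille It}.
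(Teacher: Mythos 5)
Your argument is correct and follows the same backbone as the paper's: the localization of the backward influence set $I_0^T$ from Lemma~\ref{lem : taille It}, whose tail bound forces $I_0^T\subset\Lambda_m$ with probability tending to $1$, hence a.s.\ agreement of the truncated and full constructions for $m$ large, hence the limits by dominated convergence. For \eqref{e:cv-theta-m} your treatment matches the paper's almost verbatim.

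For \eqref{e:cv-ppivot-m} the paper dismisses the proof as ``completely similar'', but you are right to pause on condition $(a)$: the value $X_t(x)[\mathscr{P}\cup\{(x,t,u,A)\}]$ is \emph{not} in general determined by the data inside $\bigcup_s I_s^T$, because $I^T_\cdot$ is the backward exploration from $(0,T)$ in the \emph{unaugmented} Poisson process, and the artificial atom at $(x,t)$ forces a branching into $x+\Lambda_R$ that $I_t^T$ need not contain (even when $x\in I_t^T$). Introducing the auxiliary backward exploration $J_0^t(x)$, launched from $(x,t)$ in the augmented process, is exactly the missing ingredient, and your observation that it reduces to the unaugmented exploration started from the deterministic set $x+\Lambda_R$ at time $t^-$ makes the branching-random-walk comparison of Lemma~\ref{lem : taille It} go through with only a $|\Lambda_R|$-inflated prefactor and constants uniform in $t\le T$. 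With $E'_m=\{I_0^T\subset\Lambda_m\}\cap\{J_0^t(x)\subset\Lambda_m\}$ all three quantities entering the pivotality definition agree between the truncated and full constructions, and the estimate $|\P_h(T\text{-pivotal})-\P_h(T\!\stackrel{m}{-}\!\text{pivotal})|\le\P_h((E'_m)^c)\to 0$ finishes the proof. In short: same method as the paper, but with an explicit (and genuinely needed) control of the extra exploration set that the paper's one-line remark leaves implicit.
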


\begin{proof}
Observe that,  for all large enough $m$, $I_0^T \subset \Lambda_m$. As a consequence, almost surely, 
$\un_{\{X_T^m(0) = 1\}}$ converges to $\un_{\{X_T(0) = 1\}}$ as $m$ goes to infinity, and \eqref{e:cv-theta-m} is a consequence of Lebesgue's dominated convergence theorem. 
The proof of \eqref{e:cv-ppivot-m} is completely similar. 
\end{proof}

We start with Russo's formula in the truncated case.
 To state it, we use the notation $\widetilde{\mathscr{S}}_{m}^{T} \coloneqq \Lambda_m\times (0,T] \times [0,M]$ and the corresponding measure $\widetilde{\lambda}_{m}^{T}$ on  $\widetilde{\mathscr{S}}_{m}^{T}$ by
$\widetilde{\lambda}_{m}^{T} \coloneqq M  \left(\mbox{counting} \otimes \mbox{Lebesgue} \otimes \mbox{uniform}\right)$. We will use the   $\widetilde{\phantom{.}}$  notation whenever we do not consider marks.

\begin{lem}\label{l:Russo-continu-tronc} The derivative of $h \mapsto \theta_T^m(h)$ is given by
\begin{equation}\label{e:Russo-continu-tronc} (\theta_T^m)'(h) = - \int_{ \widetilde{{\mathscr{S}}}_{m}^{T}} \P_h \left((x,t,u) \mbox{ is $T\stackrel{m}{-}$pivotal} \right) d \widetilde{\lambda}_{m}^{T}(x,t,u).\end{equation}
\end{lem}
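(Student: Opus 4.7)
The plan is to exploit finite-dimensionality of the truncated system. Since $\widetilde{\lambda}_m^T$ has finite total mass, the underlying unmarked Poisson process $\widetilde{\mathscr{P}}_m^T$ contains finitely many points almost surely. Conditioning on the positions $\{Y_1,\ldots,Y_N\}=\widetilde{\mathscr{P}}_m^T$ and integrating against the i.i.d.\ $A/B$-marks with law $(1-h)\delta_A+h\delta_B$ yields
\[
\E_h\!\bigl[F\bigm|\widetilde{\mathscr{P}}_m^T\bigr]=\sum_{w\in\{A,B\}^N}(1-h)^{|w^{-1}(A)|}h^{|w^{-1}(B)|}F(Y_1,\ldots,Y_N,w),
\]
with $F\coloneqq\un_{\{X_T^m(0)=1\}}$. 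This conditional expectation is a polynomial in $h$ of degree at most $N$ and bounded by $1$, so by dominated convergence (using the Poisson tails of $N$ and the crude bound $|\partial_h \Phi|\leq N$) one may differentiate termwise under the expectation.

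Differentiating then gives
\[
(\theta_T^m)'(h)=\E_h\!\Biggl[\sum_{j=1}^N\bigl(F_j^B-F_j^A\bigr)\Biggr],
\]
where $F_j^w$ denotes $F$ with the mark at $Y_j$ forced to $w$ and the other marks still random with weights $(1-h,h)$. I then identify the pathwise difference. Write $Y_j=(x,t,u)$. Condition (a) of the $T\stackrel{m}{-}$pivotality definition asks whether $X_t(x)\bigl[\mathscr{P}_m^T\cup\{(x,t,u,A)\}\bigr]=1$: if not, the mark-$A$ and mark-$B$ trajectories coincide from time $t$ onward, so the pathwise contribution vanishes; if yes, the two trajectories equal ${}^{x,t,1}X_T^m$ and ${}^{x,t,0}X_T^m$ respectively. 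By the monotonicity statement of Lemma~\ref{lem: monotonicity in xi}, the resulting difference lies in $\{0,1\}$ and equals $1$ precisely when condition (b) also holds. Hence, pathwise,
\[
F_j^B-F_j^A=-\un_{\{Y_j\text{ is }T\stackrel{m}{-}\text{pivotal for }\mathscr{P}_m^T\setminus\{(Y_j,W_j)\}\}},
\]
and the right-hand indicator does not depend on the value of $W_j$.

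To conclude I would apply Slivnyak's (reduced Palm) form of the Mecke formula for the Poisson process $\widetilde{\mathscr{P}}_m^T$ of intensity $\widetilde{\lambda}_m^T$: for any measurable $G$,
\[
\E\!\Biggl[\sum_{y\in\widetilde{\mathscr{P}}_m^T}G\bigl(y,\widetilde{\mathscr{P}}_m^T\setminus\{y\}\bigr)\Biggr]=\int_{\widetilde{\mathscr{S}}_m^T}\E\bigl[G(y,\widetilde{\mathscr{P}}_m^T)\bigr]\,d\widetilde{\lambda}_m^T(y),
\]
applied with $G(y,\Pi)$ equal to the expectation over i.i.d.\ $A/B$-marks on $\Pi$ of the pivotality indicator of $y$ with respect to the resulting marked configuration. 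The inner expectation on the right-hand side then equals exactly $\P_h(y\text{ is }T\stackrel{m}{-}\text{pivotal})$, producing \eqref{e:Russo-continu-tronc} with the correct minus sign.

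The main point requiring care—rather than a deep obstacle—is distinguishing what controls what: the pathwise comparison that pins $F_j^B-F_j^A$ to $\{0,-1\}$ uses the presence of $Y_j$ in the graphical construction together with monotonicity (which is how condition (b) enters), whereas $\partial_h$ only exposes the mark carried by $Y_j$. It is monotonicity that turns the pathwise difference into a signed indicator, so that after averaging over the other marks the identification with the pivotality probability is clean and the Slivnyak--Mecke step closes the argument.
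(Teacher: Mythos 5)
Your proof is correct, but it takes a different route from the paper's. The paper writes the intensity as $\lambda_{h,m}^T=\mu+h\nu$ with $\nu$ proportional to $\delta_B-\delta_A$ on the mark coordinate, and then invokes Theorem 19.1 of Last--Penrose, which is precisely the Margulis--Russo-type derivative formula for Poisson functionals with respect to a linearly varying intensity; that gives in one step $(\theta_T^m)'(h)=\int\E_h[D_{(x,t,u,w)}f]\,d\nu$, and after splitting $\nu$ into its $B$ and $A$ parts one is left with exactly the pathwise identity $f(\,\cdot\cup A)-f(\,\cdot\cup B)=\un_{\{\text{pivotal}\}}$ that you also use. You instead rederive the derivative formula by hand: you condition on the (a.s.\ finite) unmarked configuration, expand the conditional expectation as a polynomial in $h$, differentiate under the expectation with the domination $|\partial_h\Phi|\le N$ (which is indeed the right bound, since $\partial_h\Phi=\sum_j\E^{\text{marks}_{-j}}[F_j^B-F_j^A]$ and each summand lies in $[-1,1]$), identify the pathwise mark-swap difference with the pivotality indicator using monotonicity, and finally convert the resulting sum over points of the Poisson process into the intensity integral via Slivnyak--Mecke. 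Both arguments hinge on the same two facts -- that the mark-swap difference is a signed indicator of pivotality thanks to monotonicity, and that one can pass from ``points of the process'' to ``points of the space'' with the right weight -- but the paper packages the latter into the cited theorem while you recover it from Mecke. Your version is more self-contained and arguably more transparent about why the constant $M$ (through $\widetilde{\lambda}_m^T$) appears, at the cost of having to justify the interchange of $\partial_h$ and $\E$ yourself; the paper's version is shorter once one accepts the Last--Penrose result. One small point worth making explicit in your write-up is that when identifying $F_j^B-F_j^A$ you rely on the observation that forcing the state at $(x,t)$ supersedes any update there, so that ${}^{x,t,i}X_T(0)$ computed from $\mathscr{P}_m^T\setminus\{(Y_j,W_j)\}$ and from $\mathscr{P}_m^T$ coincide; this is what makes your pivotality ``for $\mathscr{P}_m^T\setminus\{(Y_j,W_j)\}$'' indistinguishable from the pivotality notion used in the lemma statement.
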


\begin{proof}
Write $\lambda_{h,m}^T$ as $\lambda_{h,m}^T = \mu + h \nu$, where 
$\mu \coloneqq M \left(\mbox{counting} \otimes \mbox{Lebesgue} \otimes \mbox{uniform} \otimes \delta_{\{A\}}\right)$ and 
$\nu \coloneqq M \left(\mbox{counting} \otimes \mbox{Lebesgue} \otimes \mbox{uniform} \otimes (\delta_{\{B\}} - \delta_{\{A\}}) \right)$.
Let $f \coloneqq \un_{ \left\{X_T^m(0) = 1 \right\}}$ when starting from $X_0^m = \mathbf{1}_{|\Lambda_m}$ ($f$ is viewed as a function of $\mathscr{P}_m^T$)
and using Theorem 19.1 in \cite{LasPen}, we have that 
$$ (\theta_T^m)'(h) = \int_{\mathscr{S}_m^T} \E_h \left(f(\mathscr{P}_m^T \cup \{(x,t,u,w) \}) - f(\mathscr{P}_m^T) \right) d \nu(x,t,u,w),$$
so that, in view of the definition of $\nu$, the above integral can be rewritten, after a little algebra, as  
$$\int_{\widetilde{{\mathscr{S}}}_m^T}  \E_h \left(f(\mathscr{P}_m^T \cup \{(x,t,u,B) \}) -  f(\mathscr{P}_m^T \cup \{(x,t,u,A) \}) \right) d \widetilde{\lambda}_m^T(x,t,u).$$
One then checks that 
$$f(\mathscr{P}_m^T \cup \{(x,t,u,A) \}) - f(\mathscr{P}_m^T \cup \{(x,t,u,B) \}) = \un_{\left\{ (x,t,u) \mbox{ is $T\stackrel{m}{-}$pivotal} \right\} }.$$
    \end{proof}

We now have Russo's formula for the original dynamics, using the notations  $\widetilde{\mathscr{S}}^{T} \coloneqq\Z^d\times (0,T] \times [0,M]$ and the corresponding measure $\widetilde{\lambda}^{T}$ on  $\widetilde{\mathscr{S}}^{T}$ defined by
$$\widetilde{\lambda}^{T} \coloneqq M \left(\mbox{counting} \otimes \mbox{Lebesgue} \otimes \mbox{uniform} \right).$$

\begin{prop}\label{p:Russo-continu-tronc} For all $T \geq 0$, the derivative of $h \mapsto \theta_T(h)$ is given by
\begin{equation}\label{e:Russo-continu} (\theta_T)'(h) = - \int_{ \widetilde{\mathscr{S}}^{T}} \P_h \left((x,t,u) \mbox{ is $T-$pivotal} \right) d \widetilde{\lambda}^T(x,t,u).\end{equation}
Moreover, we have that 
\begin{equation}\label{e:cv-derivee} \lim_{m \to +\infty} (\theta_T^m)'(h) = \theta_T'(h).\end{equation}
\end{prop}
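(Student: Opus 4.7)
The plan is to first establish an integrated form of Russo's formula by sending $m\to\infty$ in the truncated identity \eqref{e:Russo-continu-tronc}, then recover the differential form \eqref{e:Russo-continu} by applying the fundamental theorem of calculus, and finally deduce \eqref{e:cv-derivee} as a direct consequence.

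First I would integrate \eqref{e:Russo-continu-tronc} over $[0,h]$ to get
\[ \theta_T^m(h) - \theta_T^m(0) = - \int_0^h \int_{\widetilde{\mathscr{S}}_m^T} \P_s\bigl((x,t,u) \text{ is } T\stackrel{m}{-}\text{pivotal}\bigr) \, d\widetilde{\lambda}_m^T(x,t,u) \, ds. \]
The crucial input for letting $m\to\infty$ is that, \emph{uniformly in $h$ and in $m$},
\[ \P_h\bigl((x,t,u) \text{ is } T\stackrel{m}{-}\text{pivotal}\bigr) \;\leq\; \P_h(x \in I_0^T) \;\leq\; \kappa_1 e^{-\kappa_2 \|x\|_\infty}, \]
where $\kappa_1,\kappa_2$ depend only on $T,M,R$. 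Uniformity in $m$ is the remark made just after Lemma \ref{limite-m} that $I_0^T$ also bounds the influential sites for the truncated dynamics, and uniformity in $h$ follows because the projection $\pi(\mathscr{P})$ governing $I_0^T$ has intensity $M(\text{counting}\otimes\text{Lebesgue})$, independent of $h$. This dominating function is integrable against $\widetilde{\lambda}^T$.

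Next I would apply dominated convergence twice: inside the $ds$-integral, the convergence \eqref{e:cv-ppivot-m} of Lemma \ref{limite-m} together with the above bound gives
\[ \lim_{m\to\infty} \int_{\widetilde{\mathscr{S}}_m^T} \P_s\bigl((x,t,u) \text{ is } T\stackrel{m}{-}\text{pivotal}\bigr) \, d\widetilde{\lambda}_m^T = \int_{\widetilde{\mathscr{S}}^T} \P_s\bigl((x,t,u) \text{ is } T\text{-pivotal}\bigr) \, d\widetilde{\lambda}^T \eqqcolon -G(s), \]
and the same uniform bound shows the inner integral is bounded by a constant independent of $m$ and $s$, allowing a second dominated convergence to pass the limit through the $ds$-integration. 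Combining with \eqref{e:cv-theta-m} on the left, this yields
\[ \theta_T(h) - \theta_T(0) = \int_0^h G(s)\, ds. \]

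To finish, I would observe that $G$ is continuous on $[0,1]$: for each fixed $(x,t,u)$, Lemma \ref{l:ppivot-continue-h} gives continuity of the integrand in $s$, and dominated convergence with the bound \eqref{e:queue-pivot} promotes this to continuity of $G$. The fundamental theorem of calculus then gives $\theta_T'(h) = G(h)$, which is \eqref{e:Russo-continu}. Formula \eqref{e:cv-derivee} follows by applying dominated convergence one more time directly to the truncated formula \eqref{e:Russo-continu-tronc}, using \eqref{e:cv-ppivot-m} and the uniform bound.

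The only delicate point I anticipate is making the domination genuinely uniform in both $m$ and $h$ simultaneously; however, both ingredients—$h$-independence of the law of $I_0^T$ and the fact that $I_0^T$ dominates influence in the truncated model—are already recorded in the paper, so no new estimate is needed.
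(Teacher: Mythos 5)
Your proposal is correct and follows essentially the same route as the paper: integrate the truncated Russo formula \eqref{e:Russo-continu-tronc} over $[0,h]$, pass $m\to\infty$ via dominated convergence using the uniform exponential bound coming from $I_0^T$ (valid for the truncated dynamics too), identify the resulting integrand $G$ as continuous in $h$ via Lemma \ref{l:ppivot-continue-h}, and apply the fundamental theorem of calculus. The only small thing the paper makes explicit that you left implicit is a justification for the initial integration step $\theta_T^m(h)-\theta_T^m(0)=\int_0^h(\theta_T^m)'(\ell)\,d\ell$; the paper does this by first noting $(\theta_T^m)'$ is continuous (since the pivotality probability is continuous in $h$ and $\widetilde{\lambda}_m^T$ has finite mass), though your uniform bound, which makes $(\theta_T^m)'$ bounded and hence $\theta_T^m$ Lipschitz, would also suffice.
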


\begin{proof}
Note that arguing as in the proof of Lemma \ref{l:ppivot-continue-h}, for all $m$, $\P_h \big((x,t,u) \mbox{ is $T\stackrel{m}{-}$pivotal}\big)$ is a (bounded) continuous function of $h$.
Since we also have that $\widetilde{\lambda}_{m}^{T}(x,t,u)$ is a positive measure with finite mass, we deduce from Lemma \ref{l:Russo-continu-tronc} that $h \mapsto (\theta^m_T)'(h)$ is a continuous function, therefore
\begin{equation}\label{e:grand-mere}\theta^m_T(h) =  \theta_T^m(0)+\int_0^h (\theta^m_T)'(\ell) d \ell.\end{equation}
From Lemma  \ref{l:Russo-continu-tronc} again, we can rewrite the derivative as 
$$ (\theta^m_T)'(\ell) = - \int_{ \widetilde{\mathscr{S}}^{T}} \P_{\ell} \left((x,t,u) \mbox{ is $T\stackrel{m}{-}$pivotal} \right) \un_{\Lambda_m}(x)d \widetilde{\lambda}^T(x,t,u).$$
Using the bound  \begin{equation}\label{e:queue-pivot-tronc}\P_h ((x,t,u) \mbox{ is $T\stackrel{m}{-}$pivotal}) \leq \kappa_1 \exp(-\kappa_2 ||x||_{\infty}),\end{equation} (which is proved exactly as \eqref{e:queue-pivot}) and \eqref{e:cv-ppivot-m}, 
Lebesgue's dominated convergence shows that 
$$\lim_{m \to +\infty}(\theta^m_T)'(\ell) = - \int_{ \widetilde{\mathscr{S}}^{T}} \P_{\ell} \left((x,t,u) \mbox{ is $T-$pivotal} \right) d \widetilde{\lambda}^T(x,t,u)\coloneqq g(l).$$
Thanks to \eqref{e:queue-pivot-tronc} again, $\sup_{m,\ell} |(\theta_T^m)'(\ell)|<+\infty$, 
so that, applying Lebesgue's dominated convergence theorem in \eqref{e:grand-mere} and using \eqref{e:cv-theta-m}, we deduce that 
\begin{equation}\label{e:grand-pere} \theta_T(h) = \theta_T(0)+\int_0^h g(\ell) d \ell.\end{equation}
The fact that $\P_{\ell} \left((x,t,u) \mbox{ is $T-$pivotal} \right)$ is a continuous function of $\ell$ (which is Lemma \ref{l:ppivot-continue-h}), combined with \eqref{e:queue-pivot} shows that $g$ is a continuous function, and we thus deduce from \eqref{e:grand-pere} that 
$(\theta_T)'(h) = g(h)$.
\end{proof}

\begin{lem}\label{lem_bound_derivatives}
    For any $T \geq 0$, $\sup_{h\in[0,1]}\sup_{t\in[0,T]}|\theta_t'(h)|<+\infty$. 
\end{lem}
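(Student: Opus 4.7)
Starting from Russo's formula \eqref{e:Russo-continu} in Proposition \ref{p:Russo-continu-tronc}, for any $t \in [0,T]$ and $h \in [0,1]$ I bound
\[
|\theta_t'(h)| \;\leq\; \int_{\widetilde{\mathscr{S}}^{t}} \P_h\bigl((x,s,u) \text{ is } t\text{-pivotal}\bigr)\, d\widetilde{\lambda}^{t}(x,s,u).
\]
By Lemma \ref{lem: pivot}, $t$-pivotality of $(x,s,u)$ forces $x \in I_0^{t}$, so $\P_h((x,s,u) \text{ is } t\text{-pivotal}) \leq \P_h(x \in I_0^{t})$, and this bound is independent of the continuous variables $s,u$.

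The substance of the argument is therefore to obtain a tail bound on $\P_h(x \in I_0^{t})$ that is uniform in $t \in [0,T]$. Here I would revisit the proof of Lemma \ref{lem : taille It}: the many-to-one comparison with the dominating branching random walk gives
\[
\P_h(x \in I_0^{t}) \;\leq\; e^{M(|\Lambda_R|-1)t} \cdot \P\bigl(\|\beta_t\|_\infty \geq \|x\|_\infty\bigr),
\]
where $(\beta_s)_{s\geq 0}$ is the continuous-time random walk on $\Z^d$ with jump rate $M|\Lambda_R|$ and uniform jump distribution on $\Lambda_R$. For $t\leq T$, the prefactor is bounded by $e^{M(|\Lambda_R|-1)T}$, and since $\|\beta_t\|_\infty \leq R N_t$ where $N_t$ is Poisson with mean $M|\Lambda_R|t \leq M|\Lambda_R|T$, standard Poisson tail estimates give constants $\kappa_1,\kappa_2 > 0$ depending only on $T,M,R$ such that
\[
\P_h(x \in I_0^{t}) \;\leq\; \kappa_1 \exp\bigl(-\kappa_2 \|x\|_\infty\bigr) \qquad \text{for every } t \in [0,T] \text{ and } h \in [0,1].
\]

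Plugging this uniform bound into the integral and using that $\widetilde{\lambda}^{t}$ assigns mass $Mt \leq MT$ to each slice $\{x\} \times (0,t] \times [0,M]$, I obtain
\[
|\theta_t'(h)| \;\leq\; MT\, \kappa_1 \sum_{x \in \Z^d} \exp\bigl(-\kappa_2 \|x\|_\infty\bigr),
\]
and the sum on the right is finite (dominated by $\sum_{n\geq 0}(2n+1)^d e^{-\kappa_2 n}$), yielding a bound that is independent of $h \in [0,1]$ and $t \in [0,T]$, as desired.

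The only genuine difficulty is the uniformity of the constants $\kappa_1,\kappa_2$ across $t \in [0,T]$; once one reopens the proof of Lemma \ref{lem : taille It} and observes that both the branching prefactor and the Poisson tail are monotone in the horizon, this is routine. Everything else is bookkeeping with the explicit form of $\widetilde\lambda^t$ and a geometric-type sum over $\Z^d$.
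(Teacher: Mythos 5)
Your proof is correct and follows essentially the same route as the paper: Russo's formula from Proposition~\ref{p:Russo-continu-tronc}, the bound $\P_h((x,s,u)\text{ is }t\text{-pivotal}) \leq \P_h(x\in I_0^t)$ from Lemma~\ref{lem: pivot}, a tail estimate on $\P_h(x\in I_0^t)$ uniform in $t\le T$, then integration against $\widetilde\lambda^t$. The only (cosmetic) divergence is in how you obtain uniformity in $t$: you reopen the proof of Lemma~\ref{lem : taille It} and observe that the exponential prefactor and the Poisson tail are both monotone in the horizon, whereas the paper instead notes that by construction $I_0^t$ has the same law as $I_{T-t}^T \subset I_0^T$, so the estimate~\eqref{e:controle-spread} at horizon $T$ applies directly without rederiving the constants; both are valid and equally routine.
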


\begin{proof}
    For any $h\in[0,1]$, $t\in[0,T]$, for any $(x,s,u)\in\widetilde{ \mathscr{S}}^t$, from \eqref{e:queue-pivotÉ} we deduce $$\P_{h} \left((x,s,u) \mbox{ is $t-$pivotal} \right) \leq \mathbb{P}_h(x\in I_0^t).$$ In addition, one can see from the construction of the sets $I_s^T$ in Section \ref{sec_influence} that $I_0^t$ has the same law as $I_{T-t}^T$ and $I_s^T$ is non-increasing with respect to $s$, hence 
    \[
    \P_{h} \left((x,s,u) \mbox{ is $t-$pivotal} \right) \leq \mathbb{P}_h(x\in I_{T-t}^T)\leq \mathbb{P}_h(x\in I_{0}^T) \leq \kappa_1\exp(-\kappa_2\|x\|_\infty)
    \]
    thanks to \eqref{e:controle-spread}. 
   This and Proposition \ref{p:Russo-continu-tronc} yield 
   \[
|\theta_t'(h)| \leq \int_{ \widetilde{\mathscr{S}}^{t}} \kappa_1\exp(-\kappa_2\|x\|_\infty) d \widetilde{\lambda}^t(x,s,u)\leq\kappa_1MT\sum_{x\in\mathbb{Z}^d}\exp(-\kappa_2\|x\|_\infty)
    \]
    which ends the proof of the lemma.
\end{proof}

\section{OSSS}\label{sec:OSSS}
\subsection{OSSS inequality for Poisson functionals}\label{sec_0SSS_defs}
We present in this section the continuous version of the OSSS inequality for Poisson functionals\footnote{In \cite{LasPecYog}, Poisson processes are viewed as random measures, while in the previous sections, we viewed Poisson processes as random sets. Both points of view are equivalent, and we abuse notation a little bit in the sequel by not taking the distinction into account.}, established by Last, Peccati and Yogeshwaran \cite{LasPecYog}. As explained in the introduction, one could use the discrete OSSS inequality instead, through a detour via a suitable discretization procedure. However, in this paper, we choose to rely on the framework developed by Last–Peccati–Yogeshwaran in order to work directly in the continuous setting.

Let $(\mathbb X,\mathcal X)$ be a Borel space. For any measure $\mu$ on $(\mathbb X,\mathcal X)$, $B\in\mathcal X$, we denote $\mu_B=\mu(B \cap \cdot)$. Let $(B_n)_{n\in\mathbb{N}}$ be an increasing sequence in $\mathcal{X}$ so that $\bigcup_{n\in\mathbb N}B_n=\mathbb X$. Let $\lambda$ be a measure on $(\mathbb X,\mathcal X)$ with $\lambda(B_n)<+\infty$ for all $n\in\mathbb N$. Let  $\eta$ be a Poisson point process on $\mathbb X$ with intensity measure $\lambda$. We denote $\mathbf{N}$ the set of all measures on $(\mathbb X,\mathcal X)$ which take integer values on all $B\in\mathcal{X}$ so that $B \subset B_n$ for some $n\in\mathbb{N}$. We endow $\mathbf N$ with the smallest $\sigma$-algebra making the maps $\mu \mapsto \mu(B)$ measurable for all $B\in\mathcal{X}$.

 We are interested in mappings $Z : \mathbf{N} \mapsto \mathcal{X}$; roughly, for $\mu\in \mathbf{N}$, $Z(\mu)$ corresponds to ``the part of $\mathbb X$ we are allowed to look at if we know $\mu$''. Such a $Z$ is called \emph{graph-measurable} if $(x,\mu) \mapsto \un_{\{x\in Z(\mu)\}}$ is a measurable mapping on $\mathbb X \times \mathbf{N}$. $Z$ is a \emph{stopping set} if it is graph-measurable and ``what we are allowed to look at depends only on the value of $\mu$ in the place we are allowed to look at'': for all $\mu,\nu\in \mathbf{N}$, $Z(\mu)=Z(\mu_{Z(\mu)}+\nu_{Z(\mu)^c})$.

To formalize the notion of a continuous-time exploration algorithm for $\mathbb X$, the following notion is used. 
A family $\{Z_t : t \in \mathbb{R}_+\}$ of stopping sets is called a \textit{continuous-time decision tree (CTDT)} if for any $t \in \mathbb{R}_+$ there exists $n\in\mathbb N$ so that $Z_t \subset B_n$, if $\mathbb{E}[\lambda(Z_0(\eta))] = 0$ and if the following properties are satisfied: $
 Z_s \subset Z_t\text{ for } s \leq t$ and $Z_t = \bigcap_{s > t} Z_s$ for $t \in \mathbb R_+$. $Z_t$ will represent the part of $\mathbb{X}$ explored at time $t$.
If $\{Z_t : t \in \mathbb{R}_+\}$ is a CTDT, we define $Z_\infty := \bigcup_{t \in \mathbb{R}_+} Z_t, Z_{t-} := \bigcup_{s < t} Z_s, \text{for } t \in \mathbb{R}_+^*$,
as well as $Z_{0-} := \emptyset$. 

Let $f : \mathbf{N} \to \mathbb{R}$ be a measurable function such that $\mathbb{E}[|f(\eta)|] < \infty$. We say that the CTDT \emph{determines} $f$ if all the information revealed during the exploration is sufficient to determine the value of $f$. More precisely, we require that $f(\mu) = f(\mu_{Z_\infty(\mu)})$ for any $\mu \in \mathbf{N}$ and we further assume that, as time $t$ increases, the information revealed up to time $t$ is sufficient to approximate the true value of 
$f$ with arbitrary precision, that is $f(\zeta_t){\stackrel{\mathbb P} {\rightarrow}} f(\eta')$ as $t \to \infty$
where $
        \zeta_t := \eta_{Z_\infty(\eta) \setminus Z_t(\eta)} + \eta'_{Z_t(\eta)} + \eta'_{\mathbb X \setminus Z_\infty(\eta)} $ with $\eta'$ being a Poisson point processs on $\mathbb X$ with intensity measure $\lambda$, independent from $\eta$.

We will also need the following conditions on the CTDT to ensure it is not degenerate and reveals at most one point at a time during the exploration
    \begin{equation}\label{cond:1OSSS}
        \lambda(Z_t(\mu) \setminus Z_{t-}(\mu)) = 0, \quad \mu \in \mathbf{N},\, t \in \mathbb{R}_+ 
        \end{equation}
        and
        \begin{equation}\label{eq:cond2OSSS}
        \mathbb{P}(\eta(Z_t(\eta) \setminus Z_{t-}(\eta)) \leq 1 \text{ for all } t \in \mathbb{R}_+) = 1.
    \end{equation}

Finally, we need to introduce an extra source of randomness in our exploration (we will randomize our starting set $Z_0$). We consider $(\mathbb Y,\mathcal Y)$ a measurable space and now allow the stopping sets to depend on values in $\mathbb Y$ in such a way that for every $y$, $\{Z_t^y, t\in\R_+\}$ is a CTDT and 
$(\mu,x,y)\mapsto \mathbf 1_{\{x\in Z_{t}^y(\mu)\}}$ is measurable on $\mathbf N\times\mathbb X\times \mathbb Y$ for all $t\in\R_+$. Let $Y$ be an independent random variable with values in $\mathbb Y$. If the above conditions are satisfied then $\{Z_t^Y, t\in\R_+\}$ is called a \emph{randomized CTDT}. We say that a randomized CTDT determines $f:\mathbf{N}\to\R$, if, for every $y\in\mathbb Y$, $\{Z_t^y, t\in\R_+\}$ determines $f$.

The following theorem corresponds to Corollary 4.1 in \cite{LasPecYog}.
\begin{thm}[Randomized OSSS inequality for Poisson functionals]\label{thm:OSSS} Let $f:\mathbf {N}\to\mathbb [-1,1]$ be measurable and let $Y$ be an independent random variable with values in $\mathbb Y$. Let $\{Z_t^Y, t\in\R_+\}$ be a randomized CTDT determining $f$ such that for $\P(Y\in\cdot)$-almost every $y\in\mathbb Y$, $\{Z_t^y\}$ satisfies \eqref{cond:1OSSS} and \eqref{eq:cond2OSSS}.
 Then,
\begin{equation}
    \mathrm{Var}(f) \leq 2 \int_{\mathbb X }\mathbb{P}(x \in Z_{\infty}^Y(\eta)) \mathbb{E}[|D_x f(\eta)|]\, \lambda(dx), 
\end{equation}
where $D_x f(\eta) := f(\eta + \delta_x) - f(\eta)$.
\end{thm}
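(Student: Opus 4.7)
The plan is to follow the classical OSSS strategy, adapted from the discrete setting to Poisson functionals via a coupling-and-telescoping argument. Let $\eta'$ be an independent copy of $\eta$, also independent from $Y$. Since $f \in [-1,1]$ and $f(\eta), f(\eta')$ are equidistributed,
\[
\mathrm{Var}(f(\eta)) \;=\; \tfrac{1}{2}\,\E[(f(\eta)-f(\eta'))^2] \;\leq\; \E[\,|f(\eta)-f(\eta')|\,],
\]
so it is enough to bound the right-hand side by the claimed expression. I would condition on $Y=y$ and integrate at the end, working with a fixed (deterministic) CTDT $\{Z_t\}$ throughout.

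The core idea is to interpolate between $f(\eta)$ and $f(\eta')$ along the exploration, using the coupled family $\zeta_t$ in the statement. Because $\lambda(Z_0(\eta))=0$, $\zeta_0$ agrees with $\eta$ on $Z_\infty(\eta)$ almost everywhere, and the property that the CTDT determines $f$ forces $f(\zeta_0)=f(\eta)$ a.s.; the other endpoint $f(\zeta_t) \to f(\eta')$ in probability is built into the definition of "determines". Between these endpoints, $\zeta_t$ only changes at times $T$ when a point enters $Z_T \setminus Z_{T-}$ that is an atom of $\eta$ or of $\eta'$; conditions \eqref{cond:1OSSS} and \eqref{eq:cond2OSSS} guarantee that such transitions happen one atom at a time. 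A telescoping over these jumps gives
\[
|f(\eta')-f(\eta)| \;\leq\; \sum_T |f(\zeta_T)-f(\zeta_{T-})|,
\]
and each single-atom jump at a position $x$ is an add-point (or remove-point) difference of $f$ of the form $|D_x f(\cdot)|$ evaluated on a configuration that coincides with $\eta$ or $\eta'$ outside $\{x\}$.

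Taking expectations and applying the Mecke formula separately to the $\eta$- and $\eta'$-Poisson processes turns the random sum into two integrals against $\lambda$; combined, they produce the factor $2$ in the statement. For each $x$, Palm calculus identifies the integrand as $\P(x \in Z_\infty^Y(\eta))\,\E[|D_x f(\eta)|]$: the indicator because $x$ is revealed iff $x \in Z_\infty$, and $\E[|D_x f(\eta)|]$ because after revelation the atom at $x$ is effectively replaced by an independent one. The main obstacle, which also motivates the definition of "stopping set", is making this Palm computation rigorous when the domain $Z_\infty(\eta)$ depends on $\eta$: the stopping-set identity $Z(\mu)=Z(\mu_{Z(\mu)}+\nu_{Z(\mu)^c})$ is precisely what ensures that, conditionally on $\eta_{Z_\infty(\eta)}$, the unobserved part $\eta_{Z_\infty(\eta)^c}$ remains an independent Poisson process with intensity $\lambda_{Z_\infty(\eta)^c}$, the conditional independence needed to replace a quenched jump size by the annealed quantity $\E[|D_x f(\eta)|]$ and decouple the indicator $\{x \in Z_\infty^Y(\eta)\}$ from the add-point derivative.
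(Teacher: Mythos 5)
The paper you are working from does \emph{not} prove Theorem~\ref{thm:OSSS}: the sentence immediately preceding the statement reads ``The following theorem corresponds to Corollary 4.1 in \cite{LasPecYog}'', and the result is imported from Last--Peccati--Yogeshwaran as a black box. There is therefore no in-paper proof to compare yours against; what you have written is a reconstruction of the external argument.

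As a reconstruction, your outline identifies the standard strategy (the variance identity $\mathrm{Var}(f)\le\E[|f(\eta)-f(\eta')|]$, interpolation via the family $\zeta_t$, telescoping along the exploration, and a Mecke/Palm reduction), but several steps are stated as if immediate when in fact they carry the weight of the argument. Concretely: (a) to deduce $f(\zeta_0)=f(\eta)$ you need $Z_\infty$ to be a stopping set, so that $Z_\infty(\zeta_0)=Z_\infty(\eta)$ and hence $(\zeta_0)_{Z_\infty(\zeta_0)}=\eta_{Z_\infty(\eta)}$; this follows from the stopping-set property of each $Z_t$, but it is a lemma, not a given. (b) Conditions~\eqref{cond:1OSSS} and~\eqref{eq:cond2OSSS} control only the atoms of $\eta$ swept through $Z_t(\eta)\setminus Z_{t-}(\eta)$; a separate argument is needed for atoms of the independent copy $\eta'$ that are newly revealed in $Z_t(\eta)$ (independence plus $\lambda$-nullity of the increments), and since $t$ ranges over an uncountable set one cannot simply intersect almost-sure statements over $t$. (c) The ``telescoping over these jumps'' is not a naive countable sum; it must be justified through the increasing-family structure and the defining convergence $f(\zeta_t)\stackrel{\P}{\to}f(\eta')$, typically via a martingale-type argument. (d) The final Mecke/Palm step is exactly where the stopping-set hypothesis is load-bearing, to decouple the event $\{x\in Z_\infty\}$ from the add-point difference $|D_xf|$; writing ``Palm calculus identifies the integrand'' is a placeholder for that computation, not a substitute. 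Your sketch is a correct road map, but each of (a)--(d) is a genuine gap that the cited reference has to close with nontrivial work.
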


\subsection{Bound on local influences}

The following lemma provides a comparison between the local influence appearing in the OSSS bound and the derivative appearing in Russo's formula. 
To define the CTDT and use the OSSS bound, it will be more convenient to work with the truncated model $X^m$ defined in Section \ref{sec:derivative_theta}.

\begin{lem}\label{l:borne-infl}
Let $f = \un_{ \left\{X_T^m(0) = 1 \right\}}$ when starting from $X_0^m = \mathbf{1}_{|\Lambda_m}$ ($f$ is viewed as a measurable function of $\mathscr{P}_m^T$), then let 
\begin{equation}\label{e:def-I}
I \coloneqq \int_{\mathscr{S}_m^T} \E_h \left( \left| f(\mathscr{P}_m^T \cup \{ (x,t,u,w) \}) - f(\mathscr{P}_m^T)  \right| \right) d \lambda_{h,m}^T(x,t,u,w) \end{equation}
and 
\begin{equation}\label{e:def-J}
J \coloneqq \int_{ \widetilde{\mathscr{S}}_m^T } \P_h \left((x,t,u) \mbox{ is $T\stackrel{m}{-}$pivotal} \right) d \widetilde{\lambda}_m^T(x,t,u). \end{equation}
We claim that the following bound holds:
\begin{equation}\label{e:local-infl} I \leq (M/c_1(\mathbf{1})) \cdot J.\end{equation}
\end{lem}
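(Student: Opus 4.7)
The proof will reduce to a pointwise comparison, for fixed $(x,t) \in \Lambda_m \times (0,T]$, of the $u$-integrated integrands of $I$ and $J$, using a case analysis based on the value of the added mark $w$, the value of $u$ relative to the rate thresholds, and $\xi(x) \coloneqq X_{t-}(x)$. The main obstacle is the one flagged in the sketch of proof: the event ``$(x,t,u)$ is pivotal'' is strictly stronger than ``the added point modifies $f$'' (for instance, a mark $B$ on a site with $\xi(x)=1$ always flips the state, yet this happens with mark $A$ only when $u$ falls in a subinterval of $[0,M]$). Translating the resulting ``extra'' influence into pivotality must therefore be done in expectation through the integration over $u$.

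The first step is to describe $|\Delta f(w)| \coloneqq |f(\mathscr{P}_m^T \cup \{(x,t,u,w)\})-f(\mathscr{P}_m^T)|$ explicitly. Since adding a point at $(x,t)$ only affects the value $X_t(x)$ and leaves every other site and every later update unchanged, the downstream value $X_T^m(0)$ is a deterministic function $g(X_t(x))$ of $X_t(x)$ (measurable with respect to $\mathscr{P}_m^T$ restricted to times strictly after $t$ and to the sites different from $x$). By Lemma \ref{lem: monotonicity in xi}, $g(0) = {}^{x,t,0}X_T^m(0) \le {}^{x,t,1}X_T^m(0) = g(1)$, so $|g(1)-g(0)| = \mathbf{1}_{\{(b)\}}$, where $(b)$ denotes the event from the definition of pivotality. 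A routine case split on $u$ and $w$ then yields
\begin{align}
|\Delta f(B)| &= \mathbf{1}_{\{\xi(x)=1\}}\mathbf{1}_{\{(b)\}},\\
|\Delta f(A)| &= \bigl(\mathbf{1}_{\{u<c_0(\tau_x\xi)\}}\mathbf{1}_{\{\xi(x)=1\}}+\mathbf{1}_{\{u>M-c_1(\tau_x\xi)\}}\mathbf{1}_{\{\xi(x)=0\}}\bigr)\mathbf{1}_{\{(b)\}},
\end{align}
while the pivotality indicator rewrites as
\begin{equation}
\mathbf{1}_{\{(x,t,u)\,T\stackrel{m}{-}\text{piv}\}}=\bigl(\mathbf{1}_{\{c_0(\tau_x\xi)\le u<M-c_1(\tau_x\xi)\}}\mathbf{1}_{\{\xi(x)=1\}}+\mathbf{1}_{\{u\ge M-c_1(\tau_x\xi)\}}\bigr)\mathbf{1}_{\{(b)\}}.
\end{equation}

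The second step is to integrate these expressions over $u \in [0,M]$ against Lebesgue measure. This gives, pointwise in $\omega$,
\begin{align}
\int_0^M\!\bigl((1-h)|\Delta f(A)|+h|\Delta f(B)|\bigr)\,du &= \bigl[((1-h)c_0(\tau_x\xi)+hM)\mathbf{1}_{\{\xi(x)=1\}}+(1-h)c_1(\tau_x\xi)\mathbf{1}_{\{\xi(x)=0\}}\bigr]\mathbf{1}_{\{(b)\}},\\
\int_0^M \mathbf{1}_{\{(x,t,u)\,T\stackrel{m}{-}\text{piv}\}}\,du &= \bigl[(M-c_0(\tau_x\xi))\mathbf{1}_{\{\xi(x)=1\}}+c_1(\tau_x\xi)\mathbf{1}_{\{\xi(x)=0\}}\bigr]\mathbf{1}_{\{(b)\}}.
\end{align}

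The third (and key) step is the pointwise comparison of these two quantities. On $\{\xi(x)=0\}$ the comparison is immediate, since $(1-h)c_1(\tau_x\xi) \le c_1(\tau_x\xi) \le (M/c_1(\mathbf{1}))c_1(\tau_x\xi)$. On $\{\xi(x)=1\}$ one bounds the $I$-side by $M$ and uses monotonicity of $c_0$ (property (i)) to get $c_0(\tau_x\xi) \le c_0(\mathbf{0}) = C_0$, hence $M-c_0(\tau_x\xi) \ge M-C_0 = C_1 = c_1(\mathbf{1})$; thus
\begin{equation}
(1-h)c_0(\tau_x\xi)+hM \le M \le \frac{M}{c_1(\mathbf{1})}(M-c_0(\tau_x\xi)).
\end{equation}
Combining these two cases we obtain the pointwise inequality
\begin{equation}
\int_0^M\!\bigl((1-h)|\Delta f(A)|+h|\Delta f(B)|\bigr)\,du\;\le\;\frac{M}{c_1(\mathbf{1})}\int_0^M \mathbf{1}_{\{(x,t,u)\,T\stackrel{m}{-}\text{piv}\}}\,du.
\end{equation}
Taking expectation under $\mathbb{P}_h$ and integrating over $(x,t)\in\Lambda_m\times(0,T]$ against $M\,\text{counting}\otimes dt$ (which together with $du$ reconstructs $\widetilde\lambda_m^T$, and together with the measure $(1-h)\delta_{\{A\}}+h\delta_{\{B\}}$ on $w$ reconstructs $\lambda_{h,m}^T$) yields exactly $I \le (M/c_1(\mathbf{1}))\cdot J$, as desired.
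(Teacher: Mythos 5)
Your proof is correct and rests on the same two ideas as the paper's: splitting by the value of $\xi(x) = X_{t-}^m(x)$, and using monotonicity of $c_0$ to get $M - c_0(\tau_x\xi) \ge c_1(\mathbf{1})$ (equivalently, that $(a)_m$ is automatic for $u \in [C_0, M]$ on the event $\{\xi(x)=1\}$). The only difference is presentational: you compute $|\Delta f(A)|$, $|\Delta f(B)|$, and the pivot indicator exactly and compare their $u$-integrals pointwise in $\omega$, whereas the paper instead bounds the symmetric-difference event by an inclusion into two events $I_1, I_2$ and then, for $I_2$, exploits the fact that $\P_h(X_{t-}^m(x)=1,(b)_m)$ is $u$-independent to re-integrate over $u \in [M-c_1(\mathbf{1}),M]$ at the cost of the factor $M/c_1(\mathbf{1})$; the two computations are interchangeable.
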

As mentioned in the introduction, an additional difficulty arises compared to the contact process  studied in \cite{beekenkamp}. Specifically, it is possible for the point \((x, t, u, w)\) to be pivotal (in the sense that $f(\mathscr{P}_m^T \cup \{ (x,t,u,w) \}) \neq f(\mathscr{P}_m^T)$), even though \((x, t, u)\) is not \(T\stackrel{m}{-}\)pivotal. This occurs for instance when condition (b) for being pivotal is satisfied but condition (a) is not, and \(X_t^m(x)[\mathscr{P}] = 1\). In this case, we have \(X_t^m(x)[\mathscr{P} \cup \{(x, t, u, w)\}] = 0\), meaning that the presence of the point changes the outcome. Therefore, we cannot hope for a bound on the trajectories themselves, but we may still aim for a bound involving  expectations.

\begin{proof}
Consider the event $D \coloneqq \left\{ \left| f(\mathscr{P}_m^T \cup \{ (x,t,u,w) \}) - f(\mathscr{P}_m^T)  \right| = 1 \right\}$ (the only possible non-zero value is $1$). We use the inclusion 
$D \subset  \left( \{X_{t-}^m(x) = 0 \} \cap (a)_m \cap (b)_m \right) \cup  \left( \{X_{t-}^m(x) = 1 \} \cap (b)_m \right)$,
where $(a)_m$ (resp. $(b)_m$) is the event that condition (a) (resp. (b)) for $T\stackrel{m}{-}$pivotality of $(x,t,u)$ is met.
As a consequence, $I \leq I_1 + I_2$, with
$$I_1
 \coloneqq \int_{\mathscr{S}_m^T}  \P_h \left( X_{t-}^m(x) = 0 , (a)_m \cap (b)_m \right) d \lambda_{h,m}^ T(x,t,u,w) $$
and 
$$I_2 \coloneqq\int_{\mathscr{S}_m^T}  \P_h \left( X_{t-}^m(x) = 1 , (b)_m \right) d \lambda_{h,m}^T(x,t,u,w).$$
Noting that the probability appearing in $I_1$ does not involve the value of $w$, we have that 
\begin{equation}\label{e:formule-I1} I_1
 = \int_{ \widetilde{\mathscr{S}}_m^T}  \P_h \left( X_{t-}^m(x) = 0 , (a)_m \cap (b)_m \right) d \widetilde{\lambda}_m^T(x,t,u).\end{equation}
Noting that the probability appearing in $I_2$ does not involve the value of $w$ nor of $u$, we have that 
 $$I_2
 = \int_{\Lambda_m \times [0,T]}  \P_h \left( X_{t-}^m(x) = 1 , (b)_m \right) d \widehat{\lambda}_m^T  (x,t),$$
 where $\widehat{\lambda}_m^T = M (\mbox{counting} \otimes \mbox{Lebesgue})$. Moreover, on $\{ X_{t-}^m(x) = 1 \} \cap (b)_m$, we have that $(a)_m$ is satisfied for all $u$ in the interval $[C_0,M] = [M-c_1(\mathbf{1}), M]$, so that, writing 
 $$I_2 = \frac{M}{c_1(\mathbf{1})} \int_{\Lambda_m \times [0,T] \times [M-c_1(\mathbf{1}), M]}  \P_h \left( X_{t-}^m(x) = 1 , (b)_m \right) d \widetilde{\lambda}_m^T  (x,t,u),$$
 we see that $I_2$ can be rewritten as
  $$ \frac{M}{c_1(\mathbf{1})}  \int_{\Lambda_m \times [0,T] \times [M-c_1(\mathbf{1}), M]}  \P_h \left( X_{t-}^m(x) = 1 , (a)_m \cap (b)_m \right) d \widetilde{\lambda}_m^T  (x,t,u),$$
  so that 
  \begin{equation}\label{e:formule-I2}I_2 \leq  (M/c_1(\mathbf{1}))   \int_{\widetilde{\mathscr{S}}_m^T}  \P_h \left( X_{t-}^m(x) = 1 , (a)_m \cap (b)_m \right) d \widetilde{\lambda}_m^T  (x,t,u).\end{equation}
Putting together \eqref{e:formule-I1} and  \eqref{e:formule-I2}, noting that $M/c_1(\mathbf{1}) \geq 1$, and remembering that $$ \{(x,t,u) \mbox{ is $T\stackrel{m}{-}$pivotal} \} = (a)_m \cap (b)_m,$$ we have that 
$$I_1 + I_2 \leq (M/c_1(\mathbf{1})) \int_{\widetilde{\mathscr{S}}_m^T}  \P_h \left((a)_m \cap (b)_m \right) d \widetilde{\lambda}_m^T  (x,t,u) = (M/c_1(\mathbf{1})) J.$$
\end{proof}

\subsection{Exploration process}
Let us begin with an informal description of the exploration process that is the continuous analogue of that in \cite[Lemma 2.4]{Har}. Our goal is to determine whether $X_T^m(0)=0$ when starting at $X_0^m=\mathbf{1}_{|\Lambda_m}$, using as few information on the underlying Poisson process $\mathscr{P}_m^T$ as possible. We start by sampling a random time \( S \) uniformly from the interval \([0, T]\).  We then define an auxiliary dynamics $(\check X_t^m)_{t\ge S}$ on $\{0,1\}^{\mathbb Z_m^d}$, which starts at time $S$, with configuration $\mathbf{1}_{|\Lambda_m}$, and which uses the same underlying Poisson process as $(X_t^m)_{t \geq 0}$.

 We want to know the values of the $\check X_t^m$, $S \leq t \leq T$. To do that, we start revealing the points of the Poisson process forward in time, starting from time $S$. However, if for all \( y \in x + \Lambda_R \) we have \( \check X_{t^-}^m(y) =0 \),
then \( x \) will necessarily have value \( 0 \) at time \( t \), regardless of whether there is a point at \((x, t)\) or not.  Therefore we do not reveal the Poisson process for these $(x,t)$ for now.

 By the monotonicity of the dynamics, for all $t \geq S$, $x\in\Lambda_m$, we have $X_t^m(x) \leq \check X_t^m(x)$. 
 Therefore, if \(\check X_T^{ m}(0)=0\), we know that \( X_T^{ m}(0) = 0 \), and stop the exploration process. Otherwise, if \(\check X_T^{ m}(0)=1\),  we do not know the value of $X_T^m(0)$, hence we make a similar exploration procedure starting from time 0 to determine $X_t^m$, $0 \leq t \leq T$.

More formally, we define an exploration process in the following way.  $\mathscr E_t$ will represent ``the part of the Poisson process explored before time $t$ if $S=0$'', and the $T_n$ the times at which $\mathscr{E}_t$ changes.
Start with $\mathscr{E}_{0} \coloneqq \emptyset$ and $T_0\coloneqq0$.
 Assume that, for $n \geq 0$, $T_0,\ldots,T_n$ have already been defined together with $\mathscr{E}_t$ for $t \leq T_n$, and that, for $x \in \Lambda_m$, $X_{T_n}^m(x)$ is a function of $\mathscr{P}_m^{ T} \cap \mathscr{E}_{T_n}$. 
Now let $E_{n} \coloneqq \{ x \in \Lambda_m \ | \ \exists\, y \in x + { \Lambda_R} \mbox{ s.t. }X_{T_n}^m(y) \neq 0  \}$. Define  $T_{n+1} \coloneqq \inf \{ t > T_n \ | \ \exists\, (x,t,u,w) \in \mathscr{P}_m^{\ T} \mbox{ s.t. } x \in E_{n} \}$ (which is $+\infty$ if the set is empty).  Let $\mathscr{E}_t \coloneqq \mathscr{E}_{T_n} \cup (E_n \times (T_n, t] \times [0,M] \times \{A,B\})$ for $T_n < t \leq T_{n+1}$.

Given $s \in \R$ and a subset $\Xi$ of $\Lambda_m \times [0,T] \times [0,M] \times \{A,B\}$, let 
$\varpi_s(\Xi) \coloneqq \{  (x,t-s,u,w) \ | \ (x,t,u,w) \in \Xi , \ t \geq s \}$.

Now denote by $S$ a random variable with uniform distribution over the interval $[0,T]$, independent from $\mathscr{P}_m^T$. Then for $0 \leq s \leq T-S$, let $Z_s \coloneqq \varpi_{-S}\left(\mathscr{E}_s(\varpi_S(\mathscr{P}_m^{ T}))\right)$. If $X^m_{T-S}(0) [\varpi_S(\mathscr{P}_m^{ T})] = 1$, then, for $T-S < s \leq 2T-S$, let  $Z_s \coloneqq \mathscr{E}_{s-(T-S)}(\mathscr{P}_m^{ T}) \cup Z_{T-S}$, and otherwise $Z_s \coloneqq Z_{T-S}$. Finally, for $s > 2T-S$, let    $Z_s \coloneqq Z_{2T-S}$. 

Then the family of sets $(Z_{t})_{t \geq 0}$ satisfies the assumptions of a randomized continuous-time decision tree\footnote{Strictly speaking, a CTDT involves functions that are defined for any element $\mu \in \mathbf N$, while, with our definition, $Z_{t}$ appears as a function of $\mathscr P_m^T$. It is easy to extend this definition to a function of any element $\mu \in \mathbf N$ by using the set of atoms of $\mu$ in the graphical construction instead of $\mathscr P_m^T$. To do that, we ignore the atoms of the form $(x,0,u,w)$, $(x,t,M,A)$, and if there are two atoms of the form $(x,t,u,w),(x,t,u',w')$ we ignore one of them according to an arbitrary order on $[0,M]\times\{A,B\}$.} (see Section \ref{sec_0SSS_defs}), determining the value\footnote{
To be fully precise, it determines the value of a function which outputs $X^m_T(0)$ almost surely when applied on $\mathscr P_m^T$.
} of $X_T^{m}(0)$.

We now study the probability that a point is explored. Let $(x,t,u,w)\in\Lambda_m\times [0,T]\times [0,M] \times \{A , B \}$. If $t<S$, then $(x,t,u,w)$ can be explored only if $X^m_{T-S}(0) [\varpi_S(\mathscr{P}_m^T)] = 1$. If $t> S$, $(x,t,u,w)$ is explored only if there exists $y\in x+\Lambda_R$ so that $X^m_{(t-S)^-}(y) [\varpi_S(\mathscr{P}_m^T)] = 1$. This implies 
\[
\P_h((x,t,u,w) \in Z_{\infty} )
\]
\[
\leq \frac{1}{T}\int_0^T\left(\mathbb{P}_h(X^m_{T-s}(0) [\varpi_s(\mathscr{P}_m^T)] = 1)+\un_{\{t>s\}}\sum_{y\in (x+\Lambda_R)\cap\Lambda_m}\mathbb{P}_h(X^m_{(t-s)^-}(y) [\varpi_s(\mathscr{P}_m^T)] = 1)\right)\mathrm{d}s
\]
\[
\leq \frac{1}{T}\int_0^T\left(\mathbb{P}_h(X^m_{T-s}(0) = 1)+\un_{\{t>s\}}\sum_{y\in (x+\Lambda_R)\cap\Lambda_m}\mathbb{P}_h(X^m_{(t-s)^-}(y)  = 1)\right)\mathrm{d}s.
\]
Moreover, a reasoning similar to that of Proposition \ref{prop_theta(T)} yields that $: s \mapsto \mathbb{P}_h(X^m_{s}(y)  = 1)$ is continuous, thus $\mathbb{P}_h(X^m_{(t-s)^-}(y)  = 1)=\mathbb{P}_h(X^m_{t-s}(y)  = 1)$, which implies 
\begin{equation}\label{eq:Pexplored}
\P_h((x,t,u,w) \in Z_{\infty})\leq (|\Lambda_R|+1)\frac{1}{T}\int_0^T\max_{y\in\Lambda_m}\mathbb P_h(X^m_{s}(y)=1)\mathrm{d}s.
\end{equation}
Now, for any $y\in\Lambda_m$, we can define a dynamics $(X_s^{2m,y})_{0 \leq s \leq T}$ on $(y+\Lambda_{2m}) \times [0,T]$ by using the graphical construction with the truncated Poisson process $\mathscr{P} \cap ((y+\Lambda_{2m}) \times [0,T]\times [0,M]\times\{A,B\})$ and zero boundary conditions: all sites outside $y+\Lambda_{2m}$ are assumed to be zero. If $X_0^{2m,y}=\mathbf{1}_{|\Lambda_{2m}}$, the monotonicity of the dynamics implies that for all $s\in[0,T]$, $X_s^m(y) \leq X_s^{2m,y}(y)$, hence $\mathbb{P}_h(X_s^m(y)=1) \leq \theta_s^{2m}(h)$. Consequently, \eqref{eq:Pexplored} becomes
\begin{equation}\label{eq:Pexploredbis}
\P_h((x,t,u,w) \in Z_{\infty})\leq (|\Lambda_R|+1)\frac{1}{T}\int_0^T\theta_s^{2m}(h)\mathrm{d}s.
\end{equation}

\subsection{Proof of Theorem \ref{t:BJL-1}}
Thanks to Theorem \ref{thm:OSSS} applied to the randomized CTDT $(Z_t)_{t \geq 0}$ and to $f=\mathbf 1_{\{X_T^m(0)=1\}}$,
 we have that 
$$\theta_T^m(h) (1- \theta_T^m(h))\le 2\int_{\mathscr{S}_m^T} \P_{h}(\mathbf{z} \in Z_{\infty} )  \E_{h} \left( \left| f(\mathscr{P}_m^T \cup \{ \mathbf{z} \}) - f(\mathscr{P}_m^T)  \right| \right) d \lambda_{h,m}^T(\mathbf{z}),$$
where we use the notation $\mathbf{z} = (x,t,u,w)$.
Combining \eqref{eq:Pexploredbis}, \eqref{e:local-infl} and Lemma \ref{l:Russo-continu-tronc}, we deduce that
\begin{equation}\label{e:differential-ineq-m} \theta_T^m(h) (1- \theta_T^m(h)) \leq 2 (M / c_1(\mathbf{1})) (| \Lambda_R|+1) \cdot  (-\theta^m_T)'(h)  \cdot \frac{1}{T} \int_0^T \theta_s^{2m}(h) ds.\end{equation}

We now take the limit $m \to +\infty$, using \eqref{e:cv-theta-m} (together with Lebesgue's dominated convergence theorem for the integral $\int_0^T$ term) and \eqref{e:cv-derivee}, to deduce that 
\begin{equation}\label{e:differential-ineq} \theta_T(h) (1- \theta_T(h)) \leq 2 (M / c_1(\mathbf{1})) (| \Lambda_R|+1) \cdot  (-\theta_T)'(h)  \cdot \frac{1}{T} \int_0^T \theta_s(h) ds.\end{equation}

 Thanks to Proposition \ref{prop_theta(T)}, $T\mapsto \theta_T$ is non-increasing. Moreover,  by Lemma \ref{lem: monotonicity in h}, $h \mapsto \theta_T(h)$ is non-increasing. 
Hence for $T \geq 1$, we have $\theta_T(h)\leq \theta_1(h)\leq \theta_1(0)$. Furthermore, the IPS with generator $L$ is ergodic, hence $c_0(\mathbf{1})>0$, which implies $\theta_1(0)<1$.
 Note that on the event $\{\mathscr{P}\cap(\{0\}\times [0,T]\times [0,M]\times \{A,B\})=\emptyset\}$, we have $X_T(0)=1$. It implies that $\theta_T(h)>0$ for any $T \geq 0$ and $h\in[0,1]$.  
It yields that for all $T\ge 1$ and $h\in[0,1]$
\begin{equation}\label{eq_fond} -\theta_T'(h) \ge c \frac{T}{ \int_0^T \theta_s(h) ds}\theta_T(h)\end{equation}
for some constant $c>0$ not depending on $h$ or $T$. 
Now we are able to adapt the analysis argument from \cite{OSSSdiscret} in our continuous setting. Note that compared to its original version with a discrete sequence of functions, it further requires a uniform upper bound in $h$ of the derivatives.

\begin{lem}\label{lem:analyse} Let $c>0$.
    Consider a family of non-increasing differentiable functions \( f_T : [0, h_0 ] \to (0, 1] \), $T\ge 0$ satisfying that for any $h\in[0,h_0]$, the limit $f(h) \coloneqq \lim_{T \to \infty} f_T(h)$  exists, the function $T\mapsto f_T(h)$ is continuous and 
$-f_T'(h) \geq c \frac T{\Sigma_T(h)} f_T(h)$ for all $T\ge 1,\,h\in[0,h_0]$
where \(\Sigma_T(h)\coloneqq\int_{0}^T f_t(h)dt \). Assume also that for all $T\ge 1$, 
\begin{equation}\label{cond:sup}
  \sup_{h\in[0,h_0]}  \sup_{ t\in[0,T]}|f'_t(h)| < +\infty.
\end{equation}
Then, there exists \( h_1 \in [0, h_0] \) such that:

\begin{itemize}
    \item For any \( h > h_1 \), there exists \( c_h > 0 \) such that for any \( T \) large enough, $f_T(h) \leq \exp(-c_h T)$.
    \item  For any \( h < h_1 \),  we have $f(h) \geq c(h_1-h)$.
\end{itemize}

\end{lem}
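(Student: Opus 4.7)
The plan is to adapt the analytic dichotomy scheme of \cite[Lemma~3.2]{OSSSdiscret} (see also \cite{Har}) to the continuous parameter $T$. I would define the critical threshold
\[
h_1 := \sup\{h \in [0, h_0] : f(h) > 0\},
\]
with the convention $h_1 := 0$ if $f \equiv 0$. Since each $f_T$ is non-increasing in $h$, so is $f$; hence $f(h) > 0$ for $h < h_1$ and $f(h) = 0$ for $h > h_1$.

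For the mean-field lower bound, I would write $F_T(s) := \Sigma_T(s)/T$, so the hypothesis becomes $-f_T'(s) \geq c\, f_T(s)/F_T(s)$. Since $f_t(s) \to f(s)$ as $t \to \infty$ and $f_t$ is bounded, a standard Cesàro-type argument gives $F_T(s) \to f(s)$. Thus, for any $s$ with $f(s) > 0$, one has $f_T(s)/F_T(s) \to 1$ pointwise and the ratio remains uniformly bounded. Fixing $h < h^* < h_1$ and integrating the inequality on $[h, h^*]$ yields
\[
f_T(h) - f_T(h^*) \geq c\int_h^{h^*} \frac{f_T(s)}{F_T(s)}\, ds.
\]
Sending $T \to \infty$ by pointwise convergence on the left and dominated convergence on the right gives $f(h) - f(h^*) \geq c(h^* - h)$. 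Letting $h^* \nearrow h_1$, so that $f(h^*) \to f(h_1^-) \geq 0$, yields the desired bound $f(h) \geq c(h_1 - h)$ (in both possible cases $f(h_1^-) = 0$ and $f(h_1^-) > 0$, the latter being stronger).

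For the exponential decay at $h > h_1$, integrating $(-\log f_T)'(s) \geq cT/\Sigma_T(s)$ over $[s', h]$ for any $s' \in (h_1, h)$ and using the monotonicity of $s \mapsto \Sigma_T(s)$ gives
\[
f_T(h) \leq f_T(s')\exp\Bigl(-\frac{cT(h-s')}{\Sigma_T(s')}\Bigr).
\]
It therefore suffices to exhibit some $s' \in (h_1, h)$ with $\sup_{T \geq 0} \Sigma_T(s') < +\infty$. I would produce such an $s'$ through an iterative bootstrap along a finite chain $h_1 < s_0 < s_1 < \cdots < s_K = s'$: from the pointwise fact $f_T(s_0) \to 0$ (since $s_0 > h_1$), Cesàro gives $\Sigma_T(s_0)/T \to 0$, and the displayed bound applied with $s_k$, $s_{k+1}$ in place of $s'$, $h$ yields $f_T(s_{k+1}) \leq f_T(s_k)\exp(-c(s_{k+1}-s_k)T/\Sigma_T(s_k))$, which is a much faster decay than that of $f_T(s_k)$; this improved decay in turn produces a much smaller bound on $\Sigma_T(s_{k+1})$. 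After finitely many iterations, the rate of decay of $f_T(s_K)$ becomes integrable in $T$, ensuring $\sup_T \Sigma_T(s_K) < +\infty$ and thus exponential decay of $f_T(h)$.

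The main technical obstacle is this bootstrap step: tracking how the decay rate of $f_T(s_k)$ improves from the initial $o(1)$ through polynomial to exponential in $T$ within a fixed number of iterations, and verifying that the integrals controlling $\Sigma_T(s_{k+1})$ remain finite throughout. The hypothesis \eqref{cond:sup} of uniform bounds on $f_t'$ provides the regularity needed to justify the successive integrations in $s$ and the passages to the limit $T \to \infty$ at each step.
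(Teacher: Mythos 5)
Your lower-bound argument (for $h<h_1$) is correct and in fact slightly more transparent than the paper's: working with the plain Ces\`aro average $\Sigma_T/T$ and passing to the limit via dominated convergence avoids the $\log$-weighted average $F_T(h)=\frac1{\log T}\int_1^T\frac{f_t(h)}t\,dt$ the paper uses for this half. The overall dichotomy scheme is also the right idea.

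There is, however, a genuine gap in the upper bound, and it stems from your choice of threshold. The paper defines $h_1:=\sup\{h:\limsup_T\frac{\log\Sigma_T(h)}{\log T}\geq1\}$, \emph{not} $\sup\{h:f(h)>0\}$. With the paper's definition, $h'>h_1$ immediately gives a \emph{polynomial} gap $\Sigma_T(h')\leq T^{1-\alpha}$ for some $\alpha>0$ and all large $T$; by monotonicity of $\Sigma_T$ in $h$, integrating $-f_T'\geq cT^\alpha f_T$ once on $[h',h'']$ yields stretched-exponential decay $f_T(h'')\leq e^{-c\delta T^\alpha}$, hence $\sup_T\Sigma_T(h'')<\infty$, and a second integration on $[h'',h]$ gives exponential decay. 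The bootstrap closes in exactly two steps. With your definition, $s_0>h_1$ only gives $f(s_0)=0$ and thus $\Sigma_T(s_0)=o(T)$, a purely qualitative statement with no rate. The step $f_T(s_{k+1})\leq\exp(-c(s_{k+1}-s_k)\,T/\Sigma_T(s_k))$ then does not upgrade $o(T)$ to $O(T^{1-\alpha})$ in any fixed number of iterations: if, for instance, $T/\Sigma_T(s_0)$ diverges like an iterated logarithm $\log\circ\cdots\circ\log T$ of depth $k$, each iteration advances roughly one level of that scale, so the number of iterations required grows with $k$, which is not controlled a priori by the hypotheses. This is precisely the ``technical obstacle'' you flag; it is not a routine verification but the crux of the step, and your write-up does not supply an argument that it can be resolved in finitely many iterations. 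The two thresholds do coincide a posteriori (once the lemma is established for either one), but you cannot invoke that equivalence inside the proof. The fix is to adopt the paper's definition of $h_1$; this also explains why the paper's lower bound works with the $\log$-weighted $F_T$: its $h$-derivative produces exactly the quantity $\log\Sigma_T(h)/\log T$ that defines $h_1$, so the two halves of the argument are tied to the same threshold by construction.
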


Before proving this lemma, let us first conclude the proof of Theorem \ref{t:BJL-1}. We consider the family of functions $\theta_T : [0,1] \to (0,1]$. They are non-increasing 
by Lemma \ref{lem: monotonicity in h}, and differentiable thanks to Proposition \ref{p:Russo-continu-tronc}. Proposition \ref{prop_theta(T)} implies $T \mapsto \theta_T(h)$ is continuous and non-increasing and hence the limit $\lim_{T\rightarrow\infty }\theta_T(h)$ exists for every $h\in[0,1]$. Furthermore, we have \eqref{eq_fond}, and thanks to Lemma \ref{lem_bound_derivatives}, the condition \eqref{cond:sup} is satisfied, hence Lemma \ref{lem:analyse} applies.   
Since the IPS corresponding to $h=0$ is ergodic, we have
$\lim_{T\rightarrow\infty} \theta_T(0)=0$.
This implies that $h_1=0$ and for all $h>0$, there exists \( c_h > 0 \) such that for any \( T \) large enough, $\theta_T(h) \leq \exp(-c_h T)$.
This proves exponential ergodicity for the IPS with generator $L_h$. Moreover, if we set $h = \frac{\varepsilon}{M+\varepsilon}$, the IPS associated to $L_h$ with time rescaled by  a factor $\frac{1}{1-h}$ has the same law as the IPS associated with $L_\varepsilon$, hence the IPS associated with $L_\varepsilon$ has exponential ergodicity.
This concludes the proof of Theorem \ref{t:BJL-1}. 

\begin{proof}[Proof of Lemma \ref{lem:analyse}]
Define $h_1 := \sup \{ h : \limsup_{T \to \infty} \frac{\log \Sigma_T(h)}{\log T} \geq 1 \}$. We use the convention that $h_1=0$ if the set of such $h$ is empty.

 Assume \( h >h_1 \). 
 We will prove that there is exponential decay of \( f_T(h) \) in two steps. We set $h'\in(h_1,h)$, $h''=\frac{h+h'}{2}$ and $\delta=h-h''$. First,  since $h'>h_1$ there exist \( T_0>1\) and \( \alpha > 0 \) such that $\Sigma_T(h') \leq T^{1-\alpha}$ for all $T \geq T_0$. Since the $f_t$ are non-increasing, $\Sigma_T(\tilde h) \leq T^{1-\alpha}$ holds for all $\tilde h \in [h',h_0]$, $T \geq T_0$. For such a \( T \), integrating \( -f_T' \geq c T^\alpha f_T \) between \( h' \) and \( h'' \) implies that for all $T \geq T_0$, $f_T(h'') \leq f_T(h') \exp(-c \delta T^\alpha)\le \exp(-c \delta T^\alpha)$. 
 
 This implies that there exists \( \Sigma < \infty \) such that \( \Sigma_T(h'') \leq \Sigma \) for all \( T \). Integrating \( -f_T' \geq \frac c \Sigma Tf_T \) between \( h'' \) and \( h \) gives $f_T(h) \leq \exp\left(-\frac c\Sigma\delta T\right)$ for all $T \geq 1$.

 { We now study the case $h<h_1$. }For \( T {>} 1 \), define the function \( F_T (h):= \frac{1}{\log T} \int_{1}^T \frac {f_t(h)}{t}dt \). 
Note that $F_T(h)-f(h)= \frac 1 {\log T}\int_1 ^T \frac {f_t(h)-f(h)}{t}dt$
where we recall that $f(h)=\lim_{T\rightarrow\infty}f_T(h)$.
It follows easily from the previous equality that $F_T(h)$ converges to $f(h)$ as $T$ goes to infinity. Thanks to \eqref{cond:sup}, we can differentiate $F_T$ in $h$, we obtain 
\[
        F'_T (h)= \frac{1}{\log T} \int_1 ^T \frac {f_t'(h)}{t}dt\le -\frac{c}{\log T} \int_1 ^T \frac {f_t(h)}{\Sigma_t(h)}dt = -\frac c{\log T} (\log\Sigma_{T}(h)-\log\Sigma_{1}(h)) 
\]
where we used in the last equality that $\frac {\partial}{\partial T}\Sigma_T(h)=f_T(h)$ recalling that $T\mapsto f_T(h)$ is continuous. For \( h' \in (h, h_1) \), using that \( h\mapsto \Sigma_T(h)\) is non-increasing and integrating the previous differential inequality between \( h \) and \( h' \) gives $F_T(h) - F_T(h') \geq c(h'-h)\frac{\log \Sigma_T(h')-\log \Sigma_1(h)}{\log T}$. Hence, the fact that \( F_T(h) \) converges to \( f(h) \) as \( T \to \infty \) implies
\[
f(h) - f(h') \geq c(h' - h) \limsup_{T \to \infty} \frac{\log \Sigma_T(h)-\log \Sigma_1(h')}{\log T}\geq c(h' - h).
\]
Letting \( h' \) tend to \( h_1 \) from below, we obtain $f(h) \geq c(h_1 - h)$. The result follows.
\end{proof}

\section*{Acknowledgements}

The authors wish to thank Ivailo Hartarsky for pointing out the interest of the question studied in this paper and providing us with various references.

\bibliographystyle{plain}
\bibliography{OSSS-IPS}

\end{document}